\numberwithin{equation}{section}
\long\def\eatit#1{}
\newtheorem{Thm}{Theorem}[section]
\newtheorem{Prop}[Thm]{Proposition}
\newtheorem{Lem}[Thm]{Lemma}
\theoremstyle{definition}
\newtheorem{Def}[Thm]{Definition}
\newtheorem{Ex}[Thm]{Example}
\newtheorem{Rmk}[Thm]{Remark}
\newcommand{\PP}{{\mathbb{P}}}
\newcommand{\CC}{{\mathbb{C}}}
\newcommand{\hadamardot}{\star}
\begin{document}


\title{Hadamard products of hypersurfaces}

\author{Cristiano Bocci, Enrico Carlini}

\address{Cristiano Bocci\\
Department of Information Engineering and Mathematics, University of Siena\\
Via Roma, 56 Siena, Italy}
\email{cristiano.bocci@unisi.it}

\address{Enrico Carlini\\
 Department of Mathematical Sciences, Politecnico di Torino\\
Corso Duca degli Abruzzi 24, Turin, Italy}
\email{enrico.carlini@polito.it}


\begin{abstract} In this paper we characterize hypersurfaces for which their Hadamard product is still a hypersurface
Then we study hypersurfaces and, more generally, varieties which are indempotent under Hadamard powers.

\end{abstract}

\subjclass{}
\keywords{}

\maketitle

\section{Introduction}

According to the definition in \cite{CMS, CTY}, the Hadamard product between projective varieties $V,W\subset\PP^n$,  is the closure of the image of the rational map
\[
V \times W \dashrightarrow \PP^n, \quad  ([a_0:\dots : a_n], [b_0: \dots :b_n])\mapsto [a_0b_0 :a_1b_1:\cdots :a_nb_n].
\]
For any projective variety $V$, we may consider its Hadamard square $V^{\hadamardot 2} = V \hadamardot V$ and its higher Hadamard powers $V^{\hadamardot r} = V \hadamardot V^{\hadamardot (r-1)}$.

In \cite{CMS}, the authors use this definition to describe the algebraic variety associated to the restricted Boltzmann machine, which is the undirected graphical model for binary random variables specified by the bipartite graph $K_{r,n}$. This variety is the $r-$th Hadamard power of the first secant variety of $(\PP^1)^n$. Note that  \cite{CTY} concerns the case $r=2, n=4$.

Hadamard products and powers are  well-connected to other operations of varieties.  They are the multiplicative analogs of joins and secant varieties, and in tropical geometry, tropicalized Hadamard products equal Minkowski sums.  It is then natural to study properties of this new operation, and see its effects on various varieties.  

For this reasons, in the last few years, the Hadamard product of the projective varieties has been widely studied from the point of view of Projective Geometry. The main problem in this setting is the behaviour of the Hadamard product between varieties with many zero.
The paper \cite{BCK}, where the Hadamard products of linear space are studied,  can be considered the first step in this direction. Successively, the first author, with Calussi, Fatabbi and Lorenzini (\cite{BCFL1})   address the Hadamard product of linear varieties not necessarily in general position, obtaining, in $\mathbb{P}^2$ a complete description of the possible outcomes. In $\mathbb{P}^3$, under suitable conditions (which can be prove to be generic), can be shown that, for two set $V$ and $V'$ of collinear points, $V\star V'$ consists of $| V||V'|$ points on the two different rulings of a non-degenerate quadric.
Then, in \cite{BCFL2}, they address the Hadamard product of  not necessarily generic linear varieties and show that the Hilbert function of the Hadamard product $V\star W$ of two  varieties, with $\dim(V), \dim(W)\leq 1$, is the product of the Hilbert functions of the original varieties $V$ and $W$.   Moreover, they show that the  Hadamard
product of two generic linear varieties $V$ and $W$ is projectively equivalent to a Segre embedding.
In \cite{CCFL},  the second author, with Calussi, Fatabbi and Lorenzini consider generic degenerate subvarieties $V_i\subset \PP^n$ and  compute dimension and degree formulas for the Hadamard product of the varieties $V_i$.
 
The construction of star configurations of points, via Hadamard product, described in \cite{BCK}, found a generalization in \cite{CCGVT} where the authors introduce a new construction using the Hadamard product to present star configurations of codimension $c$ of $\PP^n$ and which they called Hadamard star configurations. Successively,  Bahmani Jafarloo and Calussi introduce a more general type of Hadamard star configuration; any star configuration constructed by their approach is called a weak Hadamard star configuration. In \cite{BJC} they classify weak Hadamard star configurations and, in the case $c=n$, they investigate the existence of a (weak) Hadamard star configuration which is apolar to the generic homogeneous polynomials of degree $d$.

In \cite{FOW}, the Hadamard product is studied for projective varieties and, in particular, the authors consider Hadamard products of varieties of matrices with fixed rank also due to their connection with problems related to algebraic statistics and quantum information. The authors introduce the notion of a Hadamard decomposition and the Hadamard rank of a matrix (multiplicative versions of the well-studied additive decomposition of tensors and tensor ranks).
One of the result in \cite{FOW} is the following:
\begin{Prop}\label{Oneto}
Let $V\subset \PP^n$ be a projective variety generated by an ideal with a minimal set of generators of the type $f_{\alpha,\beta}=X^\alpha-X^\beta$. with $\alpha,\beta \in {\mathbb{N}}^{n+1}$. Then
\[
V^{\star 2}=V.
\]
\end{Prop}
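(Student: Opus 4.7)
The plan is to establish $V^{\star 2} = V$ by proving both containments separately, with the key observation being that the all-ones point $e = [1:1:\cdots:1]$ always lies in $V$: for every binomial generator one has $e^\alpha - e^\beta = 1 - 1 = 0$.

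For the containment $V^{\star 2} \subseteq V$, I would show that $V$ is closed under Hadamard product at the level of points. Given $p, q \in V$ for which $p \star q$ is a well-defined projective point, and any binomial generator $f_{\alpha,\beta} = X^\alpha - X^\beta$, the multiplicativity identity $(p \star q)^\gamma = p^\gamma q^\gamma$ for a multi-index $\gamma$ yields
\[
(p \star q)^\alpha = p^\alpha q^\alpha = p^\beta q^\beta = (p \star q)^\beta,
\]
so $p \star q$ satisfies every generator of $I(V)$ and thus lies in $V$. Since $V$ is closed in $\PP^n$ and the image of the rational map $V \times V \dashrightarrow \PP^n$ is contained in $V$, taking closure gives $V^{\star 2} \subseteq V$.

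For the reverse containment $V \subseteq V^{\star 2}$, I would exploit the identity point $e$. For any $p \in V$, the product $e \star p$ is unambiguously defined (no coordinate of $e$ vanishes, so the Hadamard map is regular at $(e,p)$) and equals $p$ itself. Since $e \in V$ and $p \in V$, we obtain $p = e \star p \in V \star V \subseteq V^{\star 2}$, proving the inclusion without even needing to take closure.

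I do not anticipate a serious obstacle: the proof rests entirely on the fact that relations of the form $X^\alpha - X^\beta$ are multiplicatively compatible with coordinate-wise product, together with the existence of a multiplicative identity inside $V$. The only minor point to check carefully is that the Hadamard products in question are defined (i.e.\ not collapsing to the indeterminacy locus), but this is immediate from the presence of $e$, whose coordinates are all nonzero. No hypothesis about minimality of the generating set is actually needed beyond the binomial form; minimality is cosmetic here.
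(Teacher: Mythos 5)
Your proof is correct. Note, however, that the paper itself does not prove Proposition \ref{Oneto}: it quotes it from \cite{FOW} and later recovers and generalizes it (Proposition \ref{propidem1}, Theorem \ref{thmidemv}) through a different mechanism, namely the Hadamard transformation of defining equations. There, by Theorem \ref{ThmGens}, for a point $P\in V\setminus\Delta_{n-1}$ the ideal of $P\star V$ is generated by the transforms $f^{\star P}=\sum_{|I|=d}\frac{a_I}{P^I}X^I$, and for binomial generators the relation $P^{I_1}=P^{I_2}$ forces $f^{\star P}$ to cut out the same hypersurface, so $P\star V=V$ for such $P$; points of $V$ lying in $\Delta_{n-1}$ then require the separate analysis of Remark \ref{puntidelta}. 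Your argument instead works pointwise: the multiplicativity $(p\star q)^{\gamma}=p^{\gamma}q^{\gamma}$ gives $V\star V\subseteq Z(I(V))=V$ in one stroke, with no genericity assumption and no case distinction for points with zero coordinates, and the observation that $e=[1:\cdots:1]\in V$ (valid precisely because every generator is a difference of monomials with coefficient $1$) gives the reverse inclusion $V\subseteq V\star V$ without even passing to closures. This is more elementary and cleaner for this special case; the paper's equation-level machinery is what permits the generalization to binomials with root-of-unity coefficients, where $e$ need not lie on the variety and one must track how the coefficients transform under Hadamard powers. Two small points worth making explicit in your write-up: the generators, being elements of a homogeneous ideal, satisfy $|\alpha|=|\beta|$, which is what makes the condition $p^{\alpha}=p^{\beta}$ independent of the choice of homogeneous coordinates for $p$; and, as you correctly observe, minimality of the generating set plays no role in the argument.
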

In this paper, starting from the previous result, we are interested to study which varieties are idempotent under Hadamard power. We focus, in particular on the case of hypersurfaces. 
Since, in general, the Hadamard product of two hypersurfaces $V$ and $W$ is the whole ambient space, we first study which conditions on $V$ and $W$ are required so that $V\star W$ is a hypersurface. We obtain a necessary and sufficient condition which is stated in Propositions \ref{2bin} and \ref{getbin}.
Successively we pass to study Hadamard powers of a hypersuface  $V$ obtaining a sufficient and necessary condition for which $V^{\star r}=V$. This results leads to Theorem \ref{thmidemv} which generalizes the result of Proposition \ref{Oneto} to $V^{\star r}=V$.

The paper is organized in the following way.

In Section \ref{basic} we recall  the definitions of a Hadamard product of varieties and Hadamard powers. In Section \ref{preliminary} we define the concept of a Hadamard transformation, $f^{\star P}$ of a polynomial $f$ for a point $P$ and we prove some related results leading to Theorem \ref{ThmGens} which proves the connection between the ideals of $V$ and $P\star V$.

In Section \ref{prodhyper} we study the Hamadard product of hypersurfaces, considering also the case of union of coordinate hyperplanes. Proposition \ref{2bin} shows that 
if  $C,D\subset \PP^n$ are  binomial hypersurfaces with ``similar" defining equations, then $C\star D$ is a binomial  hypersurface. Proposition \ref{getbin} show that this condition is also necessary.

The case of  Hamadard powers of hypersurfaces, treated in Section \ref{powhyper}, shows similar results for the case of  the Hamadard product of hypersurfaces. In this case the binomial equation of the hypersurface has an extra condition on the coefficients leading us to introduce the concept of ``binomial hypersurfaces of type $(t,\epsilon)$.

The generalization to any power of the result in \cite{FOW} is described in Section \ref{powvar}.

{\bf Acknowledgments}: The authors are memebrs of GNSAGA of INDAM. The second author was supported by MIUR grant Dipartimenti
di Eccellenza 2018-2022 (E11G18000350001).

\vspace{.2cm}

\section{Basic definitions}\label{basic}

We now recall  the definitions of the Hadamard product of varieties and Hadamard powers, as introduced in \cite{BCK}.
We work over the field of complex numbers $\mathbb{C}$.

\begin{Def}
Let $H_i\subset\PP^n,i=0,\ldots,n$, be the hyperplane $x_i=0$ and set
$$\Delta_i=\bigcup_{0\leq j_1<\ldots<j_{n-i}\leq n}H_{j_1}\cap\ldots\cap H_{j_{n-i}}.$$
\end{Def}

In other words, $\Delta_i$ is the $i-$dimensional variety of points having {\em at most} $i+1$ non-zero coordinates. Thus $\Delta_0$ is the set of coordinates points and $\Delta_{n-1}$ is the union of the coordinate hyperplanes. Note that elements of $\Delta_i$ have {\em at least} $n-i$ zero coordinates. We have the following chain of inclusions:
\begin{equation}\label{inclusiondelta}
\Delta_0=\{[1:0:\cdots:0],\ldots,[0:\cdots:0:1]\}\subset\Delta_1\subset\ldots\subset\Delta_{n-1}\subset\Delta_{n}=\PP^n.\end{equation}

\begin{Def} Let $p,q\in \PP^n$ be two points with coordinates $[a_0:a_1:\cdots:a_n]$ and $[b_0:b_1:\cdots:b_n]$. If $a_ib_i\not= 0$ for some $i$, their Hadamard product $p\hadamardot q$ of $p$ and $q$, is defined as
\[
p\hadamardot q=[a_0b_0:a_1b_1:\cdots:a_nb_n].
\]
If  $a_ib_i= 0$ for all $i=0,\dots, n$ then we say $p\hadamardot q$ is not defined.

Given varieties $V$ and $W$ in $\PP^n$, their Hadamard product $V\star W$ is defined as
\[
V\hadamardot W=\overline{\{p\hadamardot q: p\in V, q\in W, p\hadamardot q\mbox{ is defined} \}},
\]
where the closure is in the Zariski topology.
\end{Def}

\begin{Def} Given a positive integer $r$ and a variety $V\subset\PP^n$, the {\em r-{th} Hadamard power} of $V$  is
\[V^{\hadamardot r}= V\hadamardot V^{\hadamardot (r-1)},\]
where $V^{\hadamardot 0}=[1: \dots : 1]$.
\end{Def}

Note that $V\hadamardot W$ is a variety such that $\dim (V\hadamardot W)\leq  \dim (V)+\dim (W)$ and that $V\hadamardot W$ can be empty even if neither $V$ nor $W$ is empty. Moreover,  $\dim (V^{\hadamardot r})\leq r\dim (V) $ and the $r$-th Hadamard power cannot be empty if $V$ is not empty (\cite{BCK}).

Note that if $V$ and $W$ are irreducible, then $V \star W$ is irreducible.

For the rest of the paper we denote by $Z(f)$ (respectively $Z(f_1,\dots, f_t)$ and $Z(J)$) the zero set of a polynomial $f$ (respectively, of a set of polynomials $f_1,\dots, f_t$ and of an ideal $J$).


\section{Preliminary results}\label{preliminary}

Let $R$ be the ring $\CC[x_0, x_1, \dots, x_n]$. Given a vector of nonnegative integers $I=(i_0, \dots, i_n)$, we denote by $X^I$ the monomial $x_0^{i_0}x_1^{i_1}\cdots x_n^{i_n}$ and by $|I|=i_0+\cdots +i_n$. Similarly, if $P$ is a point of $\PP^n$ with coordinates $[p_0:p_1:\cdots :p_n]$, we denote by $P^I$ the monomial $X^I$ evaluated in $P$, that is $p_0^{i_0}p_1^{i_1}\cdots p_n^{i_n}$.
Moreover, if $P$ is a point of $\PP^n\setminus \Delta_{n-1}$ with coordinates $[p_0:p_1:\cdots :p_n]$, we denote by $\frac{1}{P}$ the point with coordinates $[\frac{1}{p_0}:\frac{1}{p_1}:\cdots :\frac{1}{p_n}]$.

\begin{Def}
Let $f\in \CC[x_0, x_1, \dots, x_n]$ be a homogenous polynomial, of degree $d$, of the form $f=\sum_{|I|=d}a_IX^I$ and consider a point $P\in \PP^n\setminus \Delta_{n-1}$. The Hadamard transformation of $f$ by $P$ is the polynomial
\begin{equation}\label{Hadtransformation}
f^{\hadamardot P}=\sum_{|I|=d}\frac{a_I}{P^I}X^I.
\end{equation}
\end{Def}

\begin{Lem}\label{polylem1}
If $P\in \PP^n\setminus \Delta_{n-1}$, one has
\[
(f^{\hadamardot P})^{\hadamardot \frac{1}{P}}=(f^{\hadamardot \frac{1}{P}})^{\hadamardot P}=f.
\]
\end{Lem}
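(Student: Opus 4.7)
My plan is to prove this by direct unwinding of the definition of the Hadamard transformation, since each operation acts coefficient-wise on the monomials $X^I$. The main observation is that if $P = [p_0:\cdots:p_n]$ with all $p_i \neq 0$, then for any multi-index $I=(i_0,\ldots,i_n)$ we have
\[
\left(\tfrac{1}{P}\right)^I \;=\; \prod_{j=0}^n \left(\tfrac{1}{p_j}\right)^{i_j} \;=\; \frac{1}{\prod_{j=0}^n p_j^{i_j}} \;=\; \frac{1}{P^I}.
\]
Since $P\in\PP^n\setminus\Delta_{n-1}$ means all coordinates of $P$ are nonzero, both $P^I$ and $1/P^I$ are well defined, and the reciprocal identity above is legitimate.

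Next I would compute $(f^{\hadamardot P})^{\hadamardot \frac{1}{P}}$ directly. By definition, $f^{\hadamardot P}=\sum_{|I|=d}(a_I/P^I)X^I$, so applying the transformation by $1/P$ to this polynomial replaces the coefficient $a_I/P^I$ of $X^I$ by
\[
\frac{a_I/P^I}{(1/P)^I} \;=\; \frac{a_I/P^I}{1/P^I} \;=\; a_I,
\]
recovering $f$. The computation for $(f^{\hadamardot \frac{1}{P}})^{\hadamardot P}$ is symmetric, or alternatively one can observe that $1/(1/P) = P$ as points of $\PP^n$ and reuse the first identity with $P$ replaced by $1/P$.

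There is no genuine obstacle here; the only subtlety is confirming the exponent identity $(1/P)^I = 1/P^I$ and checking that the hypothesis $P\notin\Delta_{n-1}$ guarantees every denominator that appears is nonzero, so that all Hadamard transformations in the statement are well defined in the first place.
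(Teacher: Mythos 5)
Your proof is correct and follows essentially the same route as the paper: a direct coefficient-wise computation using the definition of the Hadamard transformation, with the key observation that transforming by $\frac{1}{P}$ divides each coefficient by $\left(\frac{1}{P}\right)^I=\frac{1}{P^I}$, i.e.\ multiplies it by $P^I$. The only cosmetic difference is that you make the identity $\left(\frac{1}{P}\right)^I=\frac{1}{P^I}$ and the nonvanishing of denominators explicit, which the paper leaves implicit.
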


\begin{proof}
Since $f^{\hadamardot \frac1P}=\sum_{|I|=d}a_IP^IX^I$, one has
\[
(f^{\hadamardot P})^{\hadamardot \frac{1}{P}}=(\sum_{|I|=d}\frac{a_I}{P^I}X^I)^{\hadamardot \frac{1}{P}}=\sum_{|I|=d}\frac{a_I}{P^I}P^IX^I=\sum_{|I|=d}a_IX^I=f
\]
and
\[
(f^{\hadamardot \frac{1}{P}})^{\hadamardot P} =(\sum_{|I|=d}a_IP^IX^I)^{\hadamardot P}=\sum_{|I|=d}\frac{a_IP^I}{P^I}X^I=\sum_{|I|=d}a_IX^I=f.
\]

\end{proof}

\begin{Lem}\label{polylem2}
Let $Q\in \PP^n$ and $P\in \PP^n\setminus \Delta_{n-1}$. Then $f(Q)=0$ if and only if $f^{\hadamardot P}(P\hadamardot Q)=0$.
\end{Lem}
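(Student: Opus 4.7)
The plan is to observe that the desired biconditional follows from an exact algebraic identity $f^{\hadamardot P}(P\hadamardot Q) = f(Q)$, so no vanishing analysis beyond a direct substitution is required.

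First, I would record two points about well-definedness. Since $P\in \PP^n\setminus \Delta_{n-1}$, every coordinate $p_i$ of $P$ is nonzero, so the monomials $\tfrac{1}{P^I}$ in the definition of $f^{\hadamardot P}$ make sense, and moreover $P\hadamardot Q$ is defined for every $Q\in\PP^n$ (at least one coordinate $q_j$ is nonzero, and the corresponding $p_j q_j$ is then nonzero). Thus all expressions in the statement are legitimate.

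The key computation is the multiplicativity of the monomial map under Hadamard product of points: for any multi-index $I=(i_0,\dots,i_n)$,
\[
(P\hadamardot Q)^I = \prod_{k=0}^n (p_k q_k)^{i_k} = \left(\prod_k p_k^{i_k}\right)\left(\prod_k q_k^{i_k}\right) = P^I\cdot Q^I.
\]
Substituting this into the definition of $f^{\hadamardot P}$ gives
\[
f^{\hadamardot P}(P\hadamardot Q) = \sum_{|I|=d}\frac{a_I}{P^I}(P\hadamardot Q)^I = \sum_{|I|=d}\frac{a_I}{P^I}P^I Q^I = \sum_{|I|=d}a_I Q^I = f(Q),
\]
where the cancellation is legitimate precisely because each $P^I\neq 0$.

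Since $f^{\hadamardot P}(P\hadamardot Q) = f(Q)$ as scalars in $\CC$, one vanishes if and only if the other does, which is the claim. There is no real obstacle: the only subtle point is checking that $P\notin\Delta_{n-1}$ ensures both that $f^{\hadamardot P}$ is well defined and that the cancellation $P^I/P^I = 1$ is valid, and this is already built into the hypothesis.
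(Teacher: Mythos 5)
Your proof is correct and is essentially the paper's own argument: the paper also establishes the exact identity $f(Q)=f^{\hadamardot P}(P\hadamardot Q)$ by inserting $P^I/P^I$ and using $(P\hadamardot Q)^I=P^IQ^I$, just written in the opposite direction. Your added remarks on well-definedness (nonzero coordinates of $P$, hence $P\hadamardot Q$ defined and the cancellation valid) are a harmless, correct supplement.
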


\begin{proof}
The statement easily follows from
\begin{equation}
\begin{split}
f(Q)&=(\sum_{|I|=d}a_IQ^I)=\sum_{|I|=d}\frac{a_IP^I}{P^I}Q^I\\
&=\sum_{|I|=d}\frac{a_I}{P^I}(P^IQ^I)=\sum_{|I|=d}\frac{a_I}{P^I}(P\hadamardot Q)^I\\
&=f^{\hadamardot P}(P\hadamardot Q).
\end{split}
\end{equation}
\end{proof}

For $P\in \PP^n\setminus\Delta_{n-1}$ and $Q\in \PP^n$ denote by $\frac{Q}{P}$ the Hadamard product $Q\hadamardot \frac{1}{P}$.

\begin{Lem}\label{polylem3}
If $Q\in P\hadamardot V$ then $\frac{Q}{P}\in V$.
\end{Lem}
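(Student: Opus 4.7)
The plan is to exploit the fact that Hadamard multiplication by a point $P$ with all nonzero coordinates is a linear automorphism of $\PP^n$, and then simply run the definition of the Hadamard product through this automorphism.

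First I would observe that since $P \in \PP^n \setminus \Delta_{n-1}$, every coordinate $p_i$ of $P$ is nonzero. Consequently the map $\phi_P \colon \PP^n \to \PP^n$ defined by $\phi_P(v) = P \hadamardot v$ is defined on \emph{all} of $\PP^n$ (for any $v$ with some coordinate $v_j \ne 0$ we still have $p_j v_j \ne 0$), and it acts as the diagonal linear change of coordinates $[v_0:\cdots:v_n] \mapsto [p_0 v_0:\cdots:p_n v_n]$. In particular $\phi_P$ is a projective automorphism, with inverse $\phi_{1/P}$, where $1/P = [1/p_0:\cdots:1/p_n]$ is also a point of $\PP^n \setminus \Delta_{n-1}$ by Lemma \ref{polylem1} (or by direct inspection).

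Next I would note that because $\phi_P$ is an automorphism, the image $\phi_P(V)$ is already closed, so taking closure in the definition of $P \hadamardot V$ does nothing:
\[
P \hadamardot V \;=\; \overline{\{P \hadamardot v : v \in V\}} \;=\; \phi_P(V).
\]
Thus every point $Q \in P \hadamardot V$ is genuinely of the form $Q = P \hadamardot v$ for some $v \in V$, not merely a limit of such points. Applying $\phi_{1/P}$ then yields
\[
\tfrac{Q}{P} \;=\; Q \hadamardot \tfrac{1}{P} \;=\; (P \hadamardot v) \hadamardot \tfrac{1}{P} \;=\; \bigl(P \hadamardot \tfrac{1}{P}\bigr) \hadamardot v \;=\; [1{:}\cdots{:}1] \hadamardot v \;=\; v \;\in\; V,
\]
using that Hadamard multiplication is commutative and associative on tuples with no coordinate forcing an indeterminacy, which is guaranteed here by the nonvanishing of all the $p_i$.

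I do not expect any real obstacle: the whole statement is essentially the observation that Hadamard multiplication by a totally nonzero point is an invertible operation, so the step requiring any care is just verifying that no point of $V$ gets pushed onto the locus where $\hadamardot$ is undefined, which is ensured by $P \notin \Delta_{n-1}$. If desired, one could instead deduce the result from Lemma \ref{polylem2}: for any $f$ vanishing on $V$, $f^{\hadamardot \frac{1}{P}}$ vanishes on $P \hadamardot V$, hence if $Q \in P \hadamardot V$ then $f^{\hadamardot \frac{1}{P}}(Q) = 0$, which by Lemma \ref{polylem2} applied with $P$ replaced by $\frac{1}{P}$ gives $f(\frac{Q}{P}) = 0$; since this holds for every $f$ in the ideal of $V$, we conclude $\frac{Q}{P} \in V$.
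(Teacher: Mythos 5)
Your main argument is correct and is essentially the paper's own proof: the paper writes $\frac{Q}{P}\in \frac{1}{P}\hadamardot P\hadamardot V$ and concludes by associativity that $\frac{1}{P}\hadamardot P\hadamardot V=V$, while you make the same inversion explicit by noting that $v\mapsto P\hadamardot v$ is a diagonal projective automorphism, which in addition justifies that no closure is needed in $P\hadamardot V$ --- a point the paper leaves implicit. One caveat about your optional alternative via Lemma \ref{polylem2}: it is $f^{\hadamardot P}$, not $f^{\hadamardot \frac{1}{P}}$, that vanishes on $P\hadamardot V$ (if $f(v)=0$ then $f^{\hadamardot P}(P\hadamardot v)=0$, and being a polynomial it vanishes on the closure, as in the first part of Theorem \ref{ThmGens}); then Lemma \ref{polylem2} applied to the point $\frac{Q}{P}$, using $P\hadamardot\frac{Q}{P}=Q$, gives $f\bigl(\frac{Q}{P}\bigr)=0$ for all $f\in I(V)$. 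With $f^{\hadamardot \frac{1}{P}}$ the chain does not close: for instance with $V=\{[1:1]\}$, $f=x_0-x_1$, $P=[1:2]$, one has $f^{\hadamardot \frac{1}{P}}=x_0-2x_1$, which does not vanish at $P\hadamardot[1:1]=[1:2]$.
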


\begin{proof}
If $Q\in P\hadamardot V$ then $\frac{Q}{P}\in \frac{1}{P}\hadamardot P \hadamardot V$. By the associativity of the Hadamard product one has $\frac{1}{P}\hadamardot P \hadamardot V=V$, and the claim follows.
\end{proof}

\begin{Thm}\label{ThmGens}
Let $V\subset \PP^n$ be a variety and consider a point $P\in \PP^n\setminus \Delta_{n-1}$. If $f_1, \dots, f_s\subset \CC[x_0, \dots, x_n]$ is a generating set for $I(V)$, that is $I(V)=\langle f_1, \dots, f_s\rangle$, then $f_1^{\hadamardot P}, \dots, f_s^{\hadamardot P}$ is a generating set for $I(P\hadamardot V)$.

Moreover, if $f_1, \dots, f_s$ is a Groebner bases for $I(V)$, then $f_1^{\hadamardot P}, \dots, f_s^{\hadamardot P}$ is a Groebner bases for $I(P\hadamardot V)$.
\end{Thm}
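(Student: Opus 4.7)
The key observation to pin down first is that the Hadamard transformation is multiplicative in $f$: if $f = \sum_I a_I X^I$ and $h = \sum_J b_J X^J$, then $(hf)^{\hadamardot P} = h^{\hadamardot P} f^{\hadamardot P}$. This is immediate from the definition once one writes out $hf = \sum_{I,J} a_I b_J X^{I+J}$ and notes that $P^{I+J} = P^I P^J$ because $P$ has no zero coordinates. With this in hand, the Hadamard transformation $g \mapsto g^{\hadamardot P}$ is a $\CC$-algebra automorphism of $R$ whose inverse is $g \mapsto g^{\hadamardot \frac{1}{P}}$ by Lemma \ref{polylem1}.

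For the first statement, I would prove the two inclusions separately. The inclusion $\langle f_1^{\hadamardot P}, \dots, f_s^{\hadamardot P}\rangle \subseteq I(P\hadamardot V)$ follows by taking any $R = P \hadamardot Q \in P \hadamardot V$ with $Q \in V$ (which is always defined since $P \notin \Delta_{n-1}$) and applying Lemma \ref{polylem2}: since $f_i(Q) = 0$, we get $f_i^{\hadamardot P}(R) = 0$; vanishing on the Zariski-dense subset extends to the closure. For the reverse inclusion, take $g \in I(P\hadamardot V)$ and consider $g^{\hadamardot \frac{1}{P}}$. For $Q \in V$, Lemma \ref{polylem3} (or directly $P \hadamardot Q \in P \hadamardot V$) gives $g(P \hadamardot Q) = 0$, and by Lemmas \ref{polylem1} and \ref{polylem2} this is equivalent to $g^{\hadamardot \frac{1}{P}}(Q) = 0$. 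Hence $g^{\hadamardot \frac{1}{P}} \in I(V)$, so $g^{\hadamardot \frac{1}{P}} = \sum h_i f_i$ for some $h_i \in R$. Applying $\star P$ and using Lemma \ref{polylem1} together with the multiplicativity observed above gives $g = \sum h_i^{\hadamardot P} f_i^{\hadamardot P}$, proving membership in the claimed ideal.

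For the Gröbner basis statement, the point is that the transformation $f \mapsto f^{\hadamardot P}$ only rescales the coefficient of each monomial by a nonzero factor $1/P^I$. Consequently, for any monomial order, the supports of $f$ and $f^{\hadamardot P}$ are equal, and in particular the leading monomials coincide: $\mathrm{LM}(f^{\hadamardot P}) = \mathrm{LM}(f)$. Given $g \in I(P \hadamardot V)$, we have $g^{\hadamardot \frac{1}{P}} \in I(V)$ by the argument above, and $\mathrm{LM}(g) = \mathrm{LM}(g^{\hadamardot \frac{1}{P}})$ lies in $\langle \mathrm{LM}(f_i) \rangle = \langle \mathrm{LM}(f_i^{\hadamardot P}) \rangle$ by the Gröbner basis hypothesis. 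Combined with the first part of the theorem, this verifies the Gröbner basis criterion for $\{f_1^{\hadamardot P}, \dots, f_s^{\hadamardot P}\}$ in $I(P \hadamardot V)$.

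The only step that is not purely formal is the multiplicativity of $f \mapsto f^{\hadamardot P}$; everything else reduces to a clean application of Lemmas \ref{polylem1}--\ref{polylem3}. I do not foresee a genuine obstacle, but the scheme-theoretic subtlety worth checking is that $P \hadamardot V$ as defined is the closure of the image, so one must note that the polynomials $f_i^{\hadamardot P}$ vanish on the dense open subset $\{P \hadamardot Q : Q \in V\}$ and hence on all of $P \hadamardot V$.
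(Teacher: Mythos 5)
Your proposal is correct and follows essentially the same route as the paper: the inclusion $\langle f_i^{\hadamardot P}\rangle\subseteq I(P\hadamardot V)$ via Lemma \ref{polylem2}, the reverse inclusion by passing to $g^{\hadamardot \frac{1}{P}}\in I(V)$ and transforming back with Lemma \ref{polylem1}, and the Gr\"obner basis claim from the fact that the Hadamard transformation only rescales coefficients and hence preserves leading monomials. The only difference is cosmetic: you state explicitly the multiplicativity $(hf)^{\hadamardot P}=h^{\hadamardot P}f^{\hadamardot P}$ (equivalently, that $f\mapsto f^{\hadamardot P}$ is the algebra automorphism $x_i\mapsto x_i/p_i$), which the paper uses implicitly when writing $(\sum_i\alpha_i f_i)^{\hadamardot P}=\sum_i\alpha_i^{\hadamardot P}f_i^{\hadamardot P}$.
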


\begin{proof}

We first show that the polynomials $f_1^{\hadamardot P}, \dots, f_s^{\hadamardot P}$ vanish on $P\hadamardot V$.  If $Q\in P\hadamardot V$ then $Q=P\hadamardot C$ for some $C\in V$. By proof of Lemma \ref{polylem2} one has
\[
f_i^{\hadamardot P}(Q)=f_i^{\hadamardot P}(P\hadamardot C)=f_i(C)=0, \mbox{ for all } i=1, \dots, s.
\]
Suppose now that $g\in I(P\hadamardot V)$ is different from $f_i$ for $ i=1, \dots, s$. Then $g(Q)=0$ for all $Q\in P\hadamardot V$ and, by Lemma \ref{polylem2} and \ref{polylem3}, one has
\[
g^{\hadamardot \frac{1}{P}}(\frac{Q}{P})=0 \mbox{ for all } \frac{Q}{P}\in V
\]
that is $g^{\hadamardot \frac{1}{P}}\in I(V)$. Hence, by hypothesis $g^{\hadamardot \frac{1}{P}}=\sum_i\alpha_i f_i$ and by Lemma \ref{polylem1}
\[
g=(g^{\hadamardot \frac{1}{P}})^{\hadamardot P}=(\sum_i\alpha_i f_i)^{\hadamardot P}=\sum_i(\alpha_i)^{\hadamardot P} f_i^{\hadamardot P}
\]
and thus $f_1^{\hadamardot P}, \dots, f_s^{\hadamardot P}$ is a generating set for $I(P\hadamardot V)$.

Suppose now $f_1, \dots, f_s$ is a Groebner bases for $I(V)$, that is  , there is a monomial order such that
\[
\langle LT(I(V))\rangle=\langle LT(f_1), \dots,LT(f_s)\rangle.
\]
Since $LT(f_i)$ and $LT(f_i^{\hadamardot P})$ differs only by a constant, for all $i=1, \dots, s$, one has
\[
\langle LT(f_1), \dots,LT(f_s)\rangle=\langle LT(f_1^{\hadamardot P}), \dots,LT(f_s^{\hadamardot P})\rangle.
\]
Hence it is enough to show that $\langle LT(I(V))\rangle=\langle LT(I(P\hadamardot V))\rangle$.
The monomials $LM(g)$, for all $g$ in $I(V)\setminus \{0\}$ span $\langle LM(g): g\in I(V)\setminus \{0\}\rangle$ and since $LM(g)$ and $LT(g)$ differ only by a constant one has
\[
A:=\langle LM(g): g\in I(V)\setminus \{0\}\rangle=\langle LT(g): g\in I(V)\setminus \{0\}\rangle=\langle LT(I(V))\rangle
\]
and similarly
\[
B:=\langle LM(g): g\in I(P\hadamardot V)\setminus \{0\}\rangle=\langle LT(g): g\in I(P\hadamardot V)\setminus \{0\}\rangle=\langle LT(I(P\hadamardot V))\rangle.
\]
Hence the proof is complete if we show $A=B$.

Let $h\in A$. Then there exists $g\in I(V)$ such that $h=LM(g)$. Hence $g=\sum \alpha_i f_i$ and clearly $g^{\hadamardot P}=\sum \alpha_i^{\hadamardot P}f_i^{\hadamardot P}$ and moreover $LM(g^{\hadamardot P})=LM(g)$, since $g^{\hadamardot P}$ and $g$ differ only by a constant,  from which one has $h\in B$.

Conversely if $h\in B$, then there exists $g\in I(P\hadamardot V)$ such that $h=LM(g)$. Thus $g=\sum \alpha_i f_i^{\hadamardot P}$ and, by Lemma \ref{polylem1}, $g^{\hadamardot \frac1P}=\sum \alpha_i^{\hadamardot \frac1P}f_i$ with, again, $LM(g^{\hadamardot P})=LM(g)$, from which one has $h\in A$.
\end{proof}

\begin{Rmk} It is easy to prove that if $f_1, \dots, f_s$ is a minimal Groebner bases for $I(V)$, then $f_1^{\hadamardot P}, \dots, f_s^{\hadamardot P}$ is a minimal Groebner bases for $I(P\hadamardot V)$.
\end{Rmk}


\section{Hadamard products of hypersurfaces}\label{prodhyper}

In general the Hadamard product of two  hypersurfaces is not a hypersurface but the whole ambient space.  However, the following results show that there are cases in which the Hadamard product of two hypersurfaces is a hypersurfaces.

First of all we analyze some pathological cases. 

\begin{Lem}\label{casibuffi} Let $H_i$ be any coordinate hyperplane, for $i=0,\dots, n$. Then
\begin{itemize}
\item[i)] $H_{i_1}\star \cdots \star H_{i_t}$ is the linear subspace  $Z(x_{i_j}\,;\, j=1,\dots, t)$.
\item[ii)] $H_i\star C=H_i$ for any hypersurface $C$ different from a coordinate hyperplane.
\end{itemize}
\end{Lem}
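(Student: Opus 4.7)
The plan is to prove each of the two set equalities by establishing both inclusions separately, using only the coordinate-wise definition of $\star$ together with an elementary check that the Hadamard factors produced are well-defined projective points.

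For part (i), the inclusion $H_{i_1}\star\cdots\star H_{i_t}\subseteq Z(x_{i_1},\ldots,x_{i_t})$ is immediate: if $p^{(k)}\in H_{i_k}$ then $(p^{(k)})_{i_k}=0$, so the $i_k$-th coordinate of $p^{(1)}\star\cdots\star p^{(t)}$ vanishes for every $k$. For the reverse inclusion I would give an explicit factorization. Given $r$ with $r_{i_j}=0$ for $j=1,\ldots,t$, take $r$ itself as a point of $H_{i_1}$ and, for each $k=2,\ldots,t$, let $q^{(k)}\in\PP^n$ be the point with $q^{(k)}_{i_k}=0$ and $q^{(k)}_\ell=1$ for $\ell\neq i_k$. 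Each $q^{(k)}$ lies in $H_{i_k}$, and a direct coordinate check yields
\[
r\star q^{(2)}\star\cdots\star q^{(t)}=r,
\]
since the product is $0$ at each $i_k$ (matching $r_{i_k}=0$) and equals $r_\ell$ at every other position. Because $r\neq 0$ in $\PP^n$ while $r_{i_1}=\cdots=r_{i_t}=0$, some $r_\ell\neq 0$ with $\ell\notin\{i_1,\ldots,i_t\}$, and this coordinate survives every intermediate Hadamard multiplication, so all products are defined.

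For part (ii), the inclusion $H_i\star C\subseteq H_i$ is again immediate from $(p\star q)_i=0$ whenever $p_i=0$, combined with the fact that $H_i$ is closed. For the reverse inclusion, the key input is that $C$ meets $\PP^n\setminus\Delta_{n-1}$: since $C$ is an irreducible hypersurface distinct from every coordinate hyperplane, $C\not\subseteq H_j$ for any $j$, and by irreducibility of $C$ this forces $C\not\subseteq\bigcup_j H_j=\Delta_{n-1}$. Fix $q\in C$ with all coordinates nonzero. Given $r\in H_i$, define $p$ by $p_i=0$ and $p_\ell=r_\ell/q_\ell$ for $\ell\neq i$. Because $r_i=0$ but $r\neq 0$ in $\PP^n$, some $r_{\ell_0}$ with $\ell_0\neq i$ is nonzero, so $p$ is a legitimate point of $\PP^n$ lying in $H_i$, and by construction $p\star q=r$, giving $H_i\subseteq H_i\star C$ on the nose (no closure needed).

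The only real subtlety, and hence the main obstacle, is the well-definedness of the candidate factors as projective points; this is handled in both parts by tracking a coordinate of $r$ that remains nonzero through the construction. The existence of $q\in C$ with all coordinates nonzero in part (ii) is the single place where the hypothesis ``$C$ different from a coordinate hyperplane'' is genuinely used.
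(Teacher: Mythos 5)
Your proof is correct, and for part (ii) it takes a genuinely different route from the paper. The paper argues via the tangent-space formula of \cite[Lemma 2.12]{BCK}: at a generic point $P\star Q$ the tangent space of $H_i\star C$ is $Q\star T_P(H_i)=Q\star H_i=H_i$, so $H_i\star C$ is an $(n-1)$-dimensional subvariety of the irreducible $H_i$ and hence equals it. You instead give an explicit factorization: choosing $q\in C\setminus\Delta_{n-1}$ and setting $p_i=0$, $p_\ell=r_\ell/q_\ell$, you exhibit every $r\in H_i$ as an honest product $p\star q$, which is more elementary (no citation of \cite{BCK}, no genericity or dimension count) and even shows the image is already closed, while the paper's argument has the advantage of brevity and of plugging into general machinery that works in broader situations. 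For part (i) both arguments are coordinate bookkeeping, but you supply the reverse inclusion explicitly (via the auxiliary points $q^{(k)}$ with a single zero coordinate), which the paper leaves implicit, and you correctly track a surviving nonzero coordinate so that all intermediate products are defined. One small remark: you insert the hypothesis that $C$ is irreducible in order to pass from ``$C$ is not a coordinate hyperplane'' to ``$C\not\subseteq\Delta_{n-1}$''; the paper makes the same silent jump (its phrase ``since $C$ is not contained in a coordinate hyperplane'' is exactly what is needed), and indeed the statement as written fails for reducible $C$ such as $H_0\cup H_1$, so your reading is the intended one --- for a general reducible $C$ your construction still works verbatim provided some component of $C$ is not a coordinate hyperplane, by taking $q$ in that component.
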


\begin{proof} For  $i)$ notice that, since the hyperplane $H_{i_j}$ is the set of points with $x_{i_j}=0$, the product $H_{i_1}\star \cdots \star H_{i_t}$ consists of all points with $x_{i_j}$=0, for $ j=1,\dots, t$, i.e. the desired linear space.
Similarly, for $ii)$, since the hyperplane $H_{i}$ is the set of points with $x_{i}=0$, the product $H_{i}\star C$ is contained in $H_{i}$=0. To see that $H_i\star C=H_i$ it is enough to apply \cite[Lemma 2.12]{BCK} which yields that the tangent space to $H_i\star C$ in $P\star Q$ is 
\[
Q\star T_P(H_i)
\]
for generic points $P$ and $Q$. Note that $T_P(H_i)=H_i$ and that, since $C$ is not contained in a coordinate hyperplane, $Q\star H_i=H_i$.

\end{proof}

Before showing the results for the Hadamard product of hypersurface, we need the following definition.

\begin{Def}
An irreducible hypersurface $C\subset \PP^n$ is called binomial hypersurface if the equation defining $C$ is of the form
\[
\alpha_1X^{I_1}-\alpha_2X^{I_2}=0.
\]
\end{Def}

\begin{Rmk}
The condition of irreducibility of $C$ in the previous definition forces $I_1$ and $I_2$ to be coprime (Lemma 4.30 in \cite{HHO}).
\end{Rmk}

\begin{Rmk}\label{puntidelta}
From the definition of the binomial hypersurface $C$, if $I_1+I_2$ has no zero entries (that is, all variables are involved in the binomial defining $C$),  then $C$ does not contain points in $\Delta_{n-1}\setminus \Delta_{n-2}$ since such points contain only one zero coordinate and so the equation on $C$ does not vanish on them.
If $I_1+I_2$ has zero entries, say, for example $(I_1+I_2)_i=0$, then $C$ contains points $P$ with $P_i=0$. However, in this case, it is still possible to apply the Hadamard transformation by such $P$ since $P_i$ will be not involved in the formula (\ref{Hadtransformation}). 

On the other side, the hypersurface $C$, given by the equation $\alpha_1X^{I_1}-\alpha_2X^{I_2}=0$, contains all subspaces  of equations
\[
\begin{cases}
x_{i_1}=0\\
x_{i_2}=0\\
\vdots\\
x_{i_t}=0\\
x_{j_1}=0\\
x_{j_2}=0\\
\vdots\\
x_{j_s}=0\\
\end{cases}
\]
where $\emptyset\not=\{i_1, i_2, \dots, i_t\}\subset I_1$ and $\emptyset\not=\{j_1, j_2, \dots, j_s\}\subset I_2$.
For any such subspace $W\subset C\subset \Delta_{n-2}$ and a point $P\in W$, the Hadamard product $P\star C$ is exactly $W$ if the only zero entries of $P$ are in coordinates $i_1, i_2, \dots, i_t, j_1, j_2, \dots, j_s$, while it is strictly contained in $W$ if $P$ as extra zero coordinates. In both case, however, one has $P\star C\subset C$.
\end{Rmk}

\begin{Prop}\label{2bin}
If $C,D\subset \PP^n$ are the following binomial hypersurfaces
\[
C=Z(\alpha_1X^{I_1}-\alpha_2X^{I_2}) \mbox{ and } D=Z(\beta_1X^{I_1}-\beta_2X^{I_2}),
\]
 then $C\star D$ is the binomial  hypersurface
 \[
C\star D=Z(\alpha_1\beta_1X^{I_1}-\alpha_2\beta_2X^{I_2}).
\]
\end{Prop}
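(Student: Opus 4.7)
The plan is to apply Theorem \ref{ThmGens} to reduce the computation of $C \star D$ to a single fiber $P \star D$ for a well-chosen point $P \in C$.

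First I would check that $C$ is not contained in $\Delta_{n-1}$: by the remark following the definition of binomial hypersurface, $I_1$ and $I_2$ are coprime, so $\alpha_1 X^{I_1} - \alpha_2 X^{I_2}$ cannot be a scalar multiple of a single coordinate $x_i$. Hence a generic point $P = [p_0:\ldots:p_n] \in C$ lies in $\PP^n \setminus \Delta_{n-1}$, and the defining relation of $C$ gives $\alpha_1 P^{I_1} = \alpha_2 P^{I_2} \neq 0$.

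Next I would invoke Theorem \ref{ThmGens} applied to $D$ and to this point $P$: since $I(D) = \langle \beta_1 X^{I_1} - \beta_2 X^{I_2}\rangle$ (irreducible because $I_1, I_2$ are coprime), the ideal $I(P \star D)$ is generated by the Hadamard transformation
\[
(\beta_1 X^{I_1} - \beta_2 X^{I_2})^{\star P} \;=\; \frac{\beta_1}{P^{I_1}} X^{I_1} - \frac{\beta_2}{P^{I_2}} X^{I_2}.
\]
Multiplying this generator by the nonzero constant $\alpha_1 P^{I_1} = \alpha_2 P^{I_2}$ turns it into $\alpha_1\beta_1 X^{I_1} - \alpha_2\beta_2 X^{I_2}$. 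Therefore $P \star D = E$, where $E := Z(\alpha_1\beta_1 X^{I_1} - \alpha_2\beta_2 X^{I_2})$, and crucially this equality holds for \emph{every} $P \in C \setminus \Delta_{n-1}$, independently of which such $P$ is chosen.

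Finally I would assemble the two containments. For one, $C \star D \supseteq P \star D = E$ for any single such $P$. For the other, $C \star D$ is the closure of the union of the fibers $P \star D$ as $P$ ranges over the points of $C$ where the product is defined; each fiber off $\Delta_{n-1}$ lies in $E$, and the contribution from $C \cap \Delta_{n-1}$ is already contained in $E$ by Remark \ref{puntidelta}. Hence $C \star D \subseteq E$, and equality follows. The only mildly delicate point is verifying that a generic $P \in C$ avoids $\Delta_{n-1}$ so that Theorem \ref{ThmGens} is applicable; once this is in hand the argument collapses to a single rescaling of the Hadamard transformation.
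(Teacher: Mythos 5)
Your proposal is correct and follows essentially the same route as the paper: choose $P \in C \setminus \Delta_{n-1}$, apply Theorem \ref{ThmGens} to get the Hadamard transformation of the equation of $D$, use the relation $\alpha_1 P^{I_1} = \alpha_2 P^{I_2}$ to rescale it to $\alpha_1\beta_1 X^{I_1} - \alpha_2\beta_2 X^{I_2}$, and handle the points of $C \cap \Delta_{n-1}$ via Remark \ref{puntidelta}. Your explicit separation of the two containments is a slightly more careful bookkeeping of the same argument, not a different method.
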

\begin{proof}
Let $P\in C\setminus \Delta_{n-1}$. Since $P\in C$ one has
\begin{equation}\label{subs}
P^{I_1}=\frac{\alpha_2}{\alpha_1}P^{I_2}.
\end{equation}
Moreover, by Theorem \ref{ThmGens}, the equation defining $P\star D$ is
\[
 \frac{\beta_1}{P^{I_1}}X^{I_1}-\frac{\beta_2}{P^{I_2}}X^{I_2}=0.
 \]
Using (\ref{subs}) in the previous equation, we get the desired equation for $C\star D$.

If $P\in C\cap \Delta_{n-1}$ then, by Remark \ref{puntidelta}, $P$ has a zero entry in a coordinate not involved in $\alpha_1X^{I_1}-\alpha_2X^{I_2}$ or $P\in C\cap \Delta_{n-2}$. In the first case we can still apply the first part of the proof. In the second case, $P$, and hence also $P\star D$ are contained in a linear space $W\subset C,D$. Looking at the equations of $C$, $D$ and $C\star D$, again by Remark \ref{puntidelta}, one has $P\star D\in  Z(\alpha_1\beta_1X^{I_1}-\alpha_2\beta_2X^{I_2})$.

\end{proof}

\begin{Rmk}
Theorem 2.5 in \cite{BCFL1} states that if $H$ and $K$ are two hyperplanes in $\PP^n$ of equation respectively $a_ix_i+a_jx_j=0$ and $b_ix_i+b_jx_j=0$, then their Hadamard product $H\star K$ is the hyperplane of equation $a_ib_ix_i-a_jb_jx_j=0$. 

Observe that this result is a special case of Proposition \ref{2bin} just taking as $I_1$ and $I_2$ respectively the $i-$th and the $j-$th coordinate vectors and
\[
\alpha_1=a_i,\alpha_2=-a_j \mbox{ and } \beta_1=b_i,\beta_2=-b_j.
\]
\end{Rmk}
\vskip0.3cm
The following result shows that also the converse of Proposition \ref{2bin} is true.

\begin{Prop}\label{getbin}
Let $C,D$ be irreducible  hypersurfaces not contained in $\Delta_{n-1}$. If $C\star D$ is a hypersurface, then $C$ and $D$ are binomials, such that
\[
C=Z(\alpha_1X^{I_1}-\alpha_2X^{I_2}) \mbox{ and } D=Z(\beta_1X^{I_1}-\beta_2X^{I_2}).
\]
Moreover, $C\star D=Z(\alpha_1\beta_1X^{I_1}-\alpha_2\beta_2X^{I_2})$.

\end{Prop}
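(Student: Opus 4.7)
My plan is to use Theorem~\ref{ThmGens} to rigidly constrain $C$ and $D$ from the hypothesis that $C\star D$ is a hypersurface, then match monomials by a symmetry argument, and finally invoke Proposition~\ref{2bin} for the coefficients.

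First I reduce to a single--point statement. Because $C,D$ are irreducible, so is $C\times D$, and hence $C\star D$ is irreducible; being a hypersurface, it has dimension $n-1$. Since $C\not\subset\Delta_{n-1}$, a generic point $P\in C$ lies outside $\Delta_{n-1}$, so the diagonal map $Q\mapsto P\star Q$ is a linear automorphism of $\PP^n$, making $P\star D$ an irreducible hypersurface. As $P\star D\subseteq C\star D$ and both have dimension $n-1$, irreducibility forces $P\star D=C\star D$ for generic $P\in C$, and by symmetry $C\star Q=C\star D$ for generic $Q\in D$.

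Next I show $C$ must be binomial. Let $h$ define $C\star D$ and $g=\sum_I b_I X^I$ define $D$. By Theorem~\ref{ThmGens}, $P\star D=Z(g^{\hadamardot P})$, so $g^{\hadamardot P}=\sum_I \frac{b_I}{P^I}X^I$ is a scalar multiple of $h$ for generic $P\in C$. For any two indices $I,J$ in the support of $g$, the ratio of the corresponding coefficients of $g^{\hadamardot P}$ is independent of $P$, so $P^I/P^J$ is constant on a dense open subset of $C$; equivalently, $C$ is contained in the zero locus of a binomial $\lambda X^I-\mu X^J$ with $\lambda\mu\neq 0$. The irreducible defining polynomial of $C$ therefore divides this binomial. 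Factoring $\lambda X^I-\mu X^J=X^K(\lambda X^{I'}-\mu X^{J'})$ where $I',J'$ have disjoint support, the irreducible divisors are the coordinate variables appearing in $X^K$ and the coprime binomial $\lambda X^{I'}-\mu X^{J'}$ (irreducible by \cite[Lemma 4.30]{HHO}). The hypothesis $C\not\subset\Delta_{n-1}$ rules out the possibility that $C$ is a coordinate hyperplane, so $C=Z(\alpha_1 X^{I_1}-\alpha_2 X^{I_2})$ for some coprime $I_1,I_2$. Running the symmetric argument with the roles of $C$ and $D$ interchanged gives $D=Z(\beta_1 X^{J_1}-\beta_2 X^{J_2})$.

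It remains to match the supports. By Step~1 applied symmetrically, for generic $Q\in D$ the equation of $C\star D=C\star Q$ equals, via Theorem~\ref{ThmGens}, a scalar multiple of $f^{\hadamardot Q}=\frac{\alpha_1}{Q^{I_1}}X^{I_1}-\frac{\alpha_2}{Q^{I_2}}X^{I_2}$, a binomial with support $\{X^{I_1},X^{I_2}\}$. By the already-established description for generic $P\in C$, this same equation is a scalar multiple of $g^{\hadamardot P}$, a binomial with support $\{X^{J_1},X^{J_2}\}$. Equating the two two-element supports gives $\{I_1,I_2\}=\{J_1,J_2\}$; after relabeling $J_1=I_1$, $J_2=I_2$, Proposition~\ref{2bin} immediately produces $C\star D=Z(\alpha_1\beta_1 X^{I_1}-\alpha_2\beta_2 X^{I_2})$. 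The main obstacle is the dichotomy in the middle paragraph: pinning down that the irreducible defining polynomial of $C$ must be a coprime binomial rather than a coordinate variable, which is precisely where the hypothesis $C,D\not\subset\Delta_{n-1}$ is used in an essential way.
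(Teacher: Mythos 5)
Your overall strategy is sound and runs parallel to the paper's proof: you reduce to $P\star D=C\star D$ for a generic point via Theorem~\ref{ThmGens}, extract binomiality, then match the two supports and finish with Proposition~\ref{2bin}. However, there is a genuine flaw in the middle step. You claim that after writing $\lambda X^{I}-\mu X^{J}=X^{K}(\lambda X^{I'}-\mu X^{J'})$ with $I',J'$ of disjoint support, the factor $\lambda X^{I'}-\mu X^{J'}$ is irreducible ``by \cite[Lemma 4.30]{HHO}''. That is false in general, and the citation is used in the wrong direction: the lemma, as invoked in the paper's remark after the definition of binomial hypersurface, says that irreducibility forces coprimality, not conversely. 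For instance $x_0^2-x_1^2=(x_0-x_1)(x_0+x_1)$ is a coprime binomial that is reducible, and in your setting $I,J$ come from the support of the equation $g$ of $D$, so you have no control on the gcd of their entries. Consequently your dichotomy ``$f$ is a coordinate variable or $f$ equals the coprime binomial'' omits the case that $f$ is a proper non-monomial factor, and as written you have not shown that such a factor is itself a binomial, which is exactly what you need to conclude that $C$ is a binomial hypersurface.

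The gap can be closed in two ways. (i) Over $\CC$ every irreducible factor of a binomial is again a variable or a coprime binomial: if $d$ is the gcd of all entries of $I'$ and $J'$, then $\lambda X^{I'}-\mu X^{J'}=\lambda\prod_{k=1}^{d}(X^{I'/d}-c_k X^{J'/d})$ with $c_1,\dots,c_d$ the $d$-th roots of $\mu/\lambda$, and a coprime binomial whose exponent entries have gcd $1$ is irreducible over an algebraically closed field; with this (nontrivial, and not what the cited lemma states) fact your argument goes through. (ii) Alternatively, you can sidestep factorization entirely by reordering the steps as the paper does: first apply Theorem~\ref{ThmGens} at a single generic $Q\in D$ (resp.\ $P\in C$) to conclude that the supports of the equations of $C$, $D$ and $C\star D$ all coincide; then the binomials produced by your constancy-of-ratios argument lie in monomials from the support of $f$ itself, hence have the same degree as $f$, so divisibility by $f$ becomes proportionality and immediately forces $f$ to have exactly two terms, with no input about irreducible factors of binomials. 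Apart from this point, your reduction $P\star D=C\star D$, the ratio argument, the final matching of the two two-element supports, and the appeal to Proposition~\ref{2bin} are correct and essentially the same mechanism as in the paper.
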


\begin{proof}
Let $E=C\star D$ and suppose that
\[
\begin{array}{rcl}
C&=&Z(\alpha_1X^{I_1}+\alpha_2X^{I_2}+\cdots + \alpha_sX^{I_s}),\\
D&=&Z(\beta_1X^{J_1}+\beta_2X^{J_2}+\cdots + \beta_rX^{J_r}),\\
E&=&Z(\gamma_1X^{K_1}+\gamma_2X^{K_2}+\cdots + \gamma_tX^{K_t}).
\end{array}
\]
By hypothesis $C\star Q\subseteq E$ for all $Q\in D$. But since $D$ is not contained in $\Delta_{n-1}$, for all points $Q\in D\setminus \Delta_{n-1}$ one has that $C\star Q$ is a hypersurface, hence  $C\star Q=E$. Thus, by Theorem \ref{ThmGens}, 
\[
Z\left(\alpha_1X^{I_1}+\alpha_2X^{I_2}+\cdots + \alpha_sX^{I_s})^{\star Q}\right)=E
\]
and hence $s=t$ and, possibly after relabelling, $I_i=K_i$ for $i=1, \dots, t$.

The same argument applied to $P\star D=E$, for $P\in C$ gives  $r=t$ and, possibly after relabelling, $J_i=K_i$ for $i=1, \dots, t$.

Consider now a point $P\in C\setminus \Delta_{n-1}$. Again by Theorem \ref{ThmGens} one has that the equation defining $P\star D$ is
\[
\frac{\beta_1}{P^{I_1}}X^{I_1}+\frac{\beta_2}{P^{I_2}}X^{I_2}+\cdots + \frac{\beta_s}{P^{I_s}}X^{I_s}=0
\]
and since $P\star D=E$ one has
\[
\frac{\beta_i}{P^{I_i}}=\lambda \gamma_i, \mbox{ for } i=1, \dots, s
\]
for a suitable choice of $\lambda$.
Hence the points of $C$ satisfy 
\[
\frac{\gamma_1}{\beta_1}X^{I_1}-\frac{\gamma_i}{\beta_i}X^{I_i}=0, \mbox{ for } i=2, \dots, s.
\]
Thus the ideal
\[
\langle \frac{\gamma_1}{\beta_1}X^{I_1}-\frac{\gamma_2}{\beta_2}X^{I_2}, \dots, \frac{\gamma_1}{\beta_1}X^{I_1}-\frac{\gamma_s}{\beta_s}X^{I_s} \rangle
\]
is contained in the ideal $\langle \alpha_1X^{I_1}+\alpha_2X^{I_2}+\cdots + \alpha_sX^{I_s} \rangle$, but this is possible only if $s=2$ and
\begin{equation}\label{coeffs}
\frac{\gamma_1}{\beta_1}=\rho\alpha_1, \, \frac{\gamma_2}{\beta_2}=-\rho\alpha_2.
\end{equation}
It follows, possibly changing the signs, that
\[
\begin{array}{rcl}
C&=&Z(\alpha_1X^{I_1}-\alpha_2X^{I_2})\\
D&=&Z(\beta_1X^{I_1}-\beta_2X^{I_2})
\end{array}
\]
and, using (\ref{coeffs}) that
\[
E=Z(\alpha_1\beta_1X^{I_1}-\alpha_2\beta_2X^{I_2}).
\]

\end{proof}

\section{Hadamard powers of  hypersurfaces}\label{powhyper}

We study now which hypersurfaces are idempotent under Hadamard powers.
The first easy case to study concerns union, eventually with multiplicities, of coordinate hyperplanes $H_i$.

\begin{Lem}
Let $C\subset \PP^n$ be the  reducible  hypersurface  $C=H_{i_0}\cup H_{i_1}\cup \cdots \cup H_{i_s}$, union of distinct coordinate hyperplanes. Then $C^{\hadamardot t}=C$ for all $t\geq 1$.
\end{Lem}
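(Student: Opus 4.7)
The plan is to reduce the claim to a purely set-theoretic computation and then expand the $t$-fold Hadamard power using distributivity over unions. As a variety, the multiplicities $a_i$ in the divisor $C = a_0H_0 + \cdots + a_nH_n$ are irrelevant, so $C$ coincides with the reduced union $\bigcup_{i \in S} H_i$, where $S = \{i : a_i > 0\}$.

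The first step I would carry out is to record a distributivity property: for any varieties $V_1, V_2, W \subset \PP^n$,
\[
(V_1 \cup V_2) \hadamardot W = (V_1 \hadamardot W) \cup (V_2 \hadamardot W).
\]
This is immediate from the definition of $\hadamardot$ as the closure of the image of a rational map, since the image of a union is the union of images, and Zariski closure commutes with finite unions.

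Iterating this distributivity, I would expand
\[
C^{\hadamardot t} = \Bigl(\bigcup_{i \in S} H_i\Bigr)^{\hadamardot t} = \bigcup_{(i_1, \ldots, i_t) \in S^t} H_{i_1} \hadamardot H_{i_2} \hadamardot \cdots \hadamardot H_{i_t}.
\]
By Lemma \ref{casibuffi}(i), each factor on the right-hand side is the coordinate linear subspace $Z(x_{i_j}\,;\, j=1,\ldots,t) = \bigcap_{j=1}^t H_{i_j}$. For the diagonal tuples with $i_1 = \cdots = i_t = i$, this subspace is exactly $H_i$, so every irreducible component of $C$ appears in $C^{\hadamardot t}$. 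For tuples involving at least two distinct indices, the resulting linear subspace is a proper subspace of some $H_{i_j}$, hence already contained in $C$. Combining, $C \subseteq C^{\hadamardot t} \subseteq C$, and equality follows.

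The only potential subtlety is the distributivity step, but since the Hadamard product is defined as a closure of an image and both operations commute with finite unions, there is no real obstacle; the argument is essentially a bookkeeping exercise once Lemma \ref{casibuffi}(i) is invoked.
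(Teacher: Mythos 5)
Your proof is correct and follows essentially the same route as the paper: expand the Hadamard power over the components via distributivity and apply Lemma \ref{casibuffi}(i), so that the diagonal terms $H_i^{\hadamardot t}=H_i$ recover $C$ and the mixed terms are coordinate subspaces already contained in $C$. Your write-up is in fact slightly more explicit than the paper's (which compresses the same expansion into one sentence), including the correct observation that multiplicities are irrelevant and that mixed products land inside some $H_{i_j}$.
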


\begin{proof}
Since $H_i^{\star t}=H_i$, for every $t$ and $H_i\star H_j=H_i\cap H_j$ (by part i of Lemma \ref{casibuffi}), expanding all terms in $(H_{i_0}\cup H_{i_1}\cup \cdots \cup H_{i_s})^{\star t}$ we get the desired equality.
\end{proof}

\begin{Def} Let $C\subset \PP^n$ be a binomial hypersurface. We say that $C$  is of type  $(t,\epsilon)$ if 
\[
\alpha_1=1 \mbox{ and } \alpha_2=\xi^{\epsilon} 
\]
where   $\xi$ is a primitive $(t-1)-$th root of unity and $1\leq \epsilon \leq t-1$.
\end{Def}

\begin{Prop}\label{propidem1}
If $C\subset \PP^n$ be a binomial hypersurface of  type $(t,\epsilon)$, then $C^{\hadamardot t}=C$. Moreover, if $gcd(t-1,\epsilon)=1$ then $t$ is the minimal exponent for which the previous equality holds.
\end{Prop}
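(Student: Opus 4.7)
The plan is to iterate Proposition \ref{2bin} to obtain an explicit binomial formula for every Hadamard power of $C$, and then read off both assertions from that formula.

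I would first prove by induction on $k\geq 1$ the identity
\[
C^{\star k} \;=\; Z\bigl(X^{I_1} - \xi^{k\epsilon}\,X^{I_2}\bigr).
\]
The base case $k=1$ is just the definition of $C$. For the inductive step, both $C$ and $C^{\star (k-1)}$ are binomial hypersurfaces sharing the exponent pair $(I_1,I_2)$, so Proposition \ref{2bin} applies and produces the new coefficient pair
\[
\bigl(1\cdot 1,\; \xi^{\epsilon}\cdot \xi^{(k-1)\epsilon}\bigr)=\bigl(1,\;\xi^{k\epsilon}\bigr),
\]
which is exactly the claimed formula for exponent $k$.

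Specialising to $k=t$, the equality $C^{\star t}=C$ reduces to $\xi^{t\epsilon}=\xi^{\epsilon}$, i.e.\ $\xi^{(t-1)\epsilon}=1$, and this holds automatically since $\xi$ is a $(t-1)$-th root of unity. For the minimality statement, assume $\gcd(t-1,\epsilon)=1$ and suppose $C^{\star s}=C$ for some $s\geq 2$. The formula forces $\xi^{(s-1)\epsilon}=1$, so the primitive order $t-1$ of $\xi$ divides $(s-1)\epsilon$; the coprimality hypothesis then yields $(t-1)\mid (s-1)$, whence $s\geq t$.

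The only point requiring attention is the applicability of Proposition \ref{2bin} at every step of the iteration: the exponent pair $(I_1,I_2)$ is invariant under the recursion, so coprimality of the exponents (and hence irreducibility of the resulting binomial hypersurface) is preserved, and the coefficients remain roots of unity, so in particular nonzero. Since an irreducible binomial hypersurface with two distinct monomial terms is never a coordinate hyperplane, it is not contained in $\Delta_{n-1}$, and no degeneracy obstructs the induction. I do not foresee any genuine obstacle; the argument is essentially bookkeeping of the coefficient in the induction, combined with the order-theoretic identity $\xi^{(t-1)\epsilon}=1$.
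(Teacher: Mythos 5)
Your argument is correct, and it takes a genuinely different route from the paper. The paper reproves the power computation from scratch: it picks $t-1$ points $P_i\in C\setminus\Delta_{n-1}$, applies Theorem \ref{ThmGens} to the Hadamard transformation of the defining binomial by $P_1\star\cdots\star P_{t-1}$, uses $(\xi^{\epsilon})^{t-1}=1$ to see the transformed equation defines $C$ again, and separately invokes Remark \ref{puntidelta} to control the points lying in $\Delta_{n-2}$; you instead iterate Proposition \ref{2bin} to get the closed formula $C^{\star k}=Z(X^{I_1}-\xi^{k\epsilon}X^{I_2})$, which subsumes that bookkeeping. Your treatment of minimality is in fact more complete than the paper's: the paper only argues the contrapositive (if $\gcd(t-1,\epsilon)=\alpha>1$ then already $C^{\star r}=C$ for $r-1=(t-1)/\alpha$), whereas your divisibility argument $(t-1)\mid(s-1)\epsilon\Rightarrow(t-1)\mid(s-1)$ directly rules out any smaller exponent when $\gcd(t-1,\epsilon)=1$. (For the forced equality of coefficients you implicitly use that equal binomial hypersurfaces have proportional degree-$d$ equations, which is fine since the defining binomial is irreducible.)

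One justification in your closing paragraph is off, though the conclusion you need is true: coprimality of the monomials $X^{I_1},X^{I_2}$ does \emph{not} imply irreducibility of the binomial over $\CC$ (e.g.\ $x^2-y^2$), so "coprimality is preserved, hence irreducibility" is not a valid inference. To keep each $C^{\star k}$ a binomial hypersurface in the sense of the paper's definition (so that Proposition \ref{2bin} applies at the next step), either quote the paper's remark in Section \ref{basic} that a Hadamard product of irreducible varieties is irreducible (so $C^{\star k}=C\star C^{\star(k-1)}$ is irreducible because $C$ is), or note that a diagonal change of coordinates rescales the coefficient of $X^{I_2}$ arbitrarily among nonzero values, so irreducibility of $Z(X^{I_1}-cX^{I_2})$ is independent of $c\neq 0$ and is inherited from $C$. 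With that small repair your proof is complete.
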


\begin{proof}
By hypothesis the equation  of $C$ is $F(x_0,\dots, x_n)= X^{I_1}-\xi^{\epsilon} X^{I_2}$. Consider $t-1$ points $P_i\in C\setminus \Delta_{n-1}$. Each point satisfies $P_i^{I_1}=\xi^{\epsilon} P_i^{I_2}$ and
\begin{equation}\label{prodeqgen}
\begin{split}
(P_1\hadamardot \cdots \hadamardot P_{t-1})^{I_1}&=P_1^{I_1}\cdots P_{t-1}^{I_1}\\
&=\xi^{\epsilon}P_1^{I_2}\cdots \xi^{\epsilon}P_{t-1}^{I_2} \\
&=(\xi^{\epsilon})^{t-1}P_1^{I_2}\cdots P_{t-1}^{I_2} \\
&=P_1^{I_2}\cdots P_{t-1}^{I_2}\\
&=(P_1\hadamardot \cdots \hadamardot P_{t-1})^{I_2}.
\end{split}
\end{equation}

By Theorem \ref{ThmGens}, the equation for $P_1\hadamardot \cdots \hadamardot P_{t-1} \hadamardot C$ is
\[
F^{\hadamardot (P_1\hadamardot \cdots \hadamardot P_{t-1} )}(x_0,\dots,x_n)=\frac{ X^{I_1}}{(P_1\hadamardot \cdots \hadamardot P_{t-1})^{I_1}}- \frac{\xi^{\epsilon}X^{I_2}}{(P_1\hadamardot \cdots \hadamardot P_{t-1})^{I_2}}
\]
and, by (\ref{prodeqgen}), one has  that $F^{\hadamardot (P_1\hadamardot \cdots \hadamardot P_{t-1} )}(x_0,\dots,x_n)$ and $F(x_0,\dots,x_n)$ define the same hypersurface, hence $P_1\hadamardot \cdots \hadamardot P_{t-1} \hadamardot C=C$.
Since this holds for every choice of $P_1,\dots, P_{t-1}\in C\setminus \Delta_{n-2}$ and, by Remark \ref{puntidelta}, $P_1\hadamardot \cdots \hadamardot P_{t-1} \hadamardot C\subset  C$ if $P_1\hadamardot \cdots \hadamardot P_{t-1}  \in C\cap \Delta_{n-2}$, one has $C^{\hadamardot t}=C$.

Suppose now that $gcd(t-1,\epsilon)=\alpha>1$. Since $\xi^\epsilon=e^{\frac{2\pi \epsilon i}{t-1}}$ and assuming that $t-1=\alpha(r-1)$ and $\epsilon=\alpha \theta$ one has $\xi^\epsilon=e^{\frac{2\pi \alpha\theta i}{\alpha(r-1)}}=e^{\frac{2\pi \theta i}{r-1}}$. Hence $\xi^\epsilon$ is the power of a   primitive $(r-1)-$th root of unity and, by the first part of the proof, we get $C^{\star r}=C$. 

\end{proof}

The following results prove that being  binomial hypersurface of type $(t,\epsilon)$ is also a necessary condition.
\begin{Prop}
Let $C\subset \PP^n$ be an irreducible hypersurface not contained in $\Delta_{n-1}$. If $C^{\hadamardot t}=C$ then $C$ is a binomial hypersurface of type $(t,\epsilon)$.
In particular, if $gdc(t-1, \epsilon)=1$ then $t$ is the minimal exponent for which $C^{\hadamardot t}=C$ holds.
\end{Prop}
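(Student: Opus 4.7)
The plan is to reduce the hypothesis $C^{\star t}=C$ to an equality $Q\star C=C$ for a generic point $Q\in \PP^n\setminus\Delta_{n-1}$, and then to exploit Theorem \ref{ThmGens} to force the defining equation of $C$ to be binomial of the prescribed shape. Write $f=\sum_{|I|=d}a_I X^I$ for the irreducible equation of $C$. Since $C\not\subset \Delta_{n-1}$, for generic $P_1,\ldots,P_{t-1}\in C$ the point $Q:=P_1\star\cdots\star P_{t-1}$ lies in $\PP^n\setminus \Delta_{n-1}$, so the map $x\mapsto Q\star x$ is a regular automorphism of $\PP^n$; consequently $Q\star C$ is an irreducible hypersurface contained in $C^{\star t}=C$, hence equal to $C$. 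By Theorem \ref{ThmGens} the ideal of $Q\star C$ is generated by $f^{\star Q}$, so $f^{\star Q}=\lambda_Q f$ for some $\lambda_Q\in\CC^*$, and comparing coefficients gives $Q^I=1/\lambda_Q$ for every $I$ in the support of $f$. In particular $Q^{I_1}=Q^{I_2}$ for any two multi-indices $I_1,I_2$ appearing in $f$.

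Next I would upgrade this identity on generic products to a pointwise statement on $C$. Fixing such $I_1,I_2$ and choosing $P_2,\ldots,P_{t-1}$ generic, the relation $\prod_{i=1}^{t-1}(P_i^{I_1}/P_i^{I_2})=1$ forces the rational function $X^{I_1}/X^{I_2}$ to be constant on $C$, say equal to some $c\in\CC^*$; specializing $P_i=P$ for a single generic $P\in C$ then yields $c^{t-1}=1$. Hence the binomial $X^{I_1}-c X^{I_2}$, of degree $d$, vanishes identically on $C$ and must be a scalar multiple of the irreducible polynomial $f$. Since it has only two monomials and $C$ is not a coordinate hyperplane (which would force $C\subset\Delta_{n-1}$), $f$ itself has exactly two monomials. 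After normalizing $\alpha_1=1$ we obtain $f=X^{I_1}-c X^{I_2}$ with $c^{t-1}=1$, and writing $c=\xi^\epsilon$ for a primitive $(t-1)$-th root of unity $\xi$ and $\epsilon\in\{1,\ldots,t-1\}$ shows that $C$ is a binomial hypersurface of type $(t,\epsilon)$.

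For the minimality assertion, assume $C^{\star s}=C$ for some $2\le s<t$. Running the same argument with $s$ in place of $t$ yields $c^{s-1}=1$, i.e.\ $(t-1)\mid \epsilon(s-1)$; combined with $\gcd(t-1,\epsilon)=1$ this forces $(t-1)\mid (s-1)$, contradicting $s<t$. Thus $t$ is minimal.

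The main obstacle I expect is the combined justification that $Q\star C=C$ (not merely $\subseteq C$) and that $X^{I_1}/X^{I_2}$ restricts to an honest constant on $C$: both steps rely on the hypothesis $C\not\subset\Delta_{n-1}$ together with the irreducibility of $C$. Once these are in hand, the passage from ``$f$ divides a binomial of the same degree as $f$'' to ``$f$ is itself a binomial'' and the roots-of-unity arithmetic at the end are straightforward.
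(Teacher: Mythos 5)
Your argument is correct, but it follows a genuinely different route from the paper's. The paper deduces binomiality structurally: since $C^{\hadamardot t}=C$, a dimension argument shows each intermediate power, in particular $C\hadamardot C$, is again a hypersurface, so Proposition \ref{getbin} forces $C=Z(\alpha_1X^{I_1}-\alpha_2X^{I_2})$; then Proposition \ref{2bin}, iterated, gives $C^{\hadamardot t}=Z(\alpha_1^tX^{I_1}-\alpha_2^tX^{I_2})$, and $C^{\hadamardot t}=C$ yields $\alpha_i^t=\lambda\alpha_i$, hence $\alpha_2/\alpha_1$ is a $(t-1)$-th root of unity; minimality is delegated to Proposition \ref{propidem1}. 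You instead bypass Proposition \ref{getbin} entirely: from $Q\hadamardot C\subseteq C^{\hadamardot t}=C$ for $Q=P_1\hadamardot\cdots\hadamardot P_{t-1}$ a product of points of $C\setminus\Delta_{n-1}$, and the fact that $x\mapsto Q\hadamardot x$ is an automorphism, you get $Q\hadamardot C=C$, so Theorem \ref{ThmGens} gives $f^{\hadamardot Q}=\lambda_Q f$ and hence $Q^{I}$ independent of $I$ on the support of $f$; freezing all but one factor shows $X^{I_1}/X^{I_2}$ is constant on $C$, the constant is a $(t-1)$-th root of unity, and irreducibility of $f$ forces $f$ to be exactly that binomial. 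What each approach buys: the paper's proof is shorter given its earlier machinery, while yours is more self-contained, avoids the somewhat delicate ``by dimensional reasons each successive Hadamard product is a hypersurface'' step, and proves the minimality claim directly (via $(t-1)\mid\epsilon(s-1)$ and $\gcd(t-1,\epsilon)=1$) rather than citing Proposition \ref{propidem1}, whose minimality assertion the paper itself only treats in the case $\gcd(t-1,\epsilon)>1$. The only points worth spelling out in a final write-up are the genericity bookkeeping (that $(C\setminus\Delta_{n-1})^{t-1}$ is dense in $C^{t-1}$, so the identity $\prod_i g(P_i)=1$ propagates to a single variable) and the observation that a single-monomial $f$ would put $C$ inside $\Delta_{n-1}$, both of which you already indicate.
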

\begin{proof}
Let $C$ be a hypersurface satisfying  $C^{\hadamardot t}=C$. Hence, by dimensional reasons, each successive Hadamard product in 
\[
C\star(C\star (C\star(\cdots (C\star C))))
\]
 is a hypersurface. In particular, by Proposition \ref{getbin}, applied to $C\star C$, one has that $C$ must be  binomial of equation $\alpha_1X^{I_1}-\alpha_2X^{I_2}=0$. Then by Corollary \ref{getbin} we get that the equation of $C^{\star t}$ is
 $\alpha_1^tX^{I_1}-\alpha_2^tX^{I_2}=0$. 

Finally, the condition $C^{\star t}=C$ implies that $\alpha_i^t=\lambda \alpha_i$,  for $ i=1,2$ and a suitable nonzero $\lambda$. Thus we get
\[
\frac{\alpha_2}{\alpha_1}\left\lbrack\left(\frac{\alpha_2}{\alpha_1}\right)^{t-1}-1\right\rbrack=0.
\]
Excluding the trivial solution $\alpha_2=0$, we get that $\frac{\alpha_2}{\alpha_1}$ is a $(t-1)-$th root of unity, showing that $C$ is binomial of type $(t,\epsilon)$.

If $gdc(t-1, \epsilon)=1$, then,  by Proposition \ref{propidem1}, we get the minimality of $t$.

\end{proof}

We end this section with an analogue results for reducible hypersurfaces.

\begin{Prop}
Let $C_1\cup\cdots \cup C_s$ be a reducible hypersurface such that  $C_i\not\subset \Delta_{n-1}$ for each component $C_i$, $i=1, \dots,s$. Then $(C_1\cup\cdots \cup C_s)^{\star r}=C_1\cup\cdots \cup C_s$ if and only if
\begin{itemize}
\item[1.] each component $C_i$ is a binomial hypersurfaces of type $(t,\epsilon_i)$ and equation $X^{I_1}-\xi^{\epsilon_i}X^{I_2}$, for $i=1,\dots, s$; 
\item[2.] all products $\xi^{d_1\epsilon_1}\cdots \xi^{d_s \epsilon_s}$, with $d_1+d_2+\cdots +d_s=r$, is equal to $\xi^{\epsilon_j}$ for some $j=1,\dots, s$.
\end{itemize}
\end{Prop}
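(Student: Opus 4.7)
The plan is to establish both directions by combining distributivity of the Hadamard product over finite unions with Propositions \ref{2bin} and \ref{getbin}.

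For the sufficiency direction, I would assume conditions 1 and 2 and expand
\[
(C_1 \cup \cdots \cup C_s)^{\star r} \;=\; \bigcup_{d_1+\cdots+d_s=r}\; C_1^{\star d_1} \star \cdots \star C_s^{\star d_s}.
\]
Condition 1 gives each $C_i = Z(X^{I_1} - \xi^{\epsilon_i} X^{I_2})$ sharing the same monomial pair, so iterating Proposition \ref{2bin} identifies
\[
C_1^{\star d_1} \star \cdots \star C_s^{\star d_s} \;=\; Z\bigl(X^{I_1} - \xi^{d_1 \epsilon_1 + \cdots + d_s \epsilon_s} X^{I_2}\bigr),
\]
and condition 2 ensures each such product coincides with some $C_j$. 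This yields the inclusion $(C_1+\cdots+C_s)^{\star r} \subseteq C_1+\cdots+C_s$; the reverse inclusion follows from reading condition 2 as ensuring each component $C_j$ is realized by at least one admissible tuple $(d_1,\dots,d_s)$ with $\sum d_i=r$.

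For the necessity direction, I would first show that the pairwise Hadamard product $C_i \star C_k$ is a proper hypersurface for every $i,k$: if $C_i \star C_k$ were all of $\PP^n$, then the longer product $C_i \star C_k \star C_1^{\star(r-2)}$ appearing in the expansion of $V^{\star r}$ would equal $\PP^n$ (since $\PP^n \star C_\ell = \PP^n$ as soon as $C_\ell \not\subset \Delta_{n-1}$), contradicting $V^{\star r} = C_1 + \cdots + C_s$ being $(n-1)$-dimensional. Proposition \ref{getbin} then forces every $C_i$ to be binomial with a common monomial pair $X^{I_1}, X^{I_2}$, so we may write $C_i = Z(X^{I_1} - \mu_i X^{I_2})$. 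Iterating Proposition \ref{2bin} gives the analogous formula with coefficient $\prod_i \mu_i^{d_i}$, and the equality $V^{\star r}=V$ forces $\prod_i \mu_i^{d_i} \in \{\mu_1,\dots,\mu_s\}$ whenever $\sum d_i = r$. Specialising to the tuple with $d_i = r$ shows $\mu_i^r$ lies in the finite set $\{\mu_1,\dots,\mu_s\}$; iterating this map and applying pigeonhole forces each $\mu_i$ to be a root of unity. Taking $t-1$ to be a common period for $\mu_1,\dots,\mu_s$ and $\xi$ a primitive $(t-1)$-th root, we obtain $\mu_i = \xi^{\epsilon_i}$, which is condition 1, and translating the product relation through $\xi$ yields condition 2.

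The main obstacle will be verifying that each pairwise product $C_i \star C_k$ (and not only the length-$r$ products that appear directly in the expansion of $V^{\star r}$) is a genuine hypersurface; the escape argument via $\PP^n \star C_\ell = \PP^n$ resolves this but relies crucially on the hypothesis $C_\ell \not\subset \Delta_{n-1}$ and needs some care when $r$ is small. A secondary technicality is extracting a single parameter $t$ and a common primitive root $\xi$ across all components, which is handled by passing to the least common multiple of the individual orders of the $\mu_i$.
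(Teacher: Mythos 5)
Your overall skeleton is the same as the paper's: expand $(C_1+\cdots+C_s)^{\star r}$ into the terms $C_1^{\star d_1}\star\cdots\star C_s^{\star d_s}$, use Proposition \ref{2bin} for sufficiency, and use Proposition \ref{getbin} plus bookkeeping on the coefficients for necessity. The differences are in two sub-steps, and both of your variants work. First, to get binomiality with a common monomial pair, the paper applies Proposition \ref{getbin} repeatedly starting from $C_i\star C_i^{\star(r-1)}=C_j$, while you first show every pairwise product $C_i\star C_k$ is a hypersurface (it is irreducible, contains $Q\star C_i$ for $Q\in C_k\setminus\Delta_{n-1}$, hence has dimension at least $n-1$, and cannot be $\PP^n$ by your escape argument through $\PP^n\star C_\ell=\PP^n$) and then apply \ref{getbin} to each pair directly; this is a legitimate and arguably cleaner route to the statement that all components share the monomials $X^{I_1},X^{I_2}$. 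Second, to show the coefficients $\mu_i$ are roots of unity, the paper orders them by absolute value and rules out $|\mu_i|\neq 1$, then writes them as powers of roots of unity, whereas you iterate $\mu\mapsto\mu^r$ inside the finite set $\{\mu_1,\dots,\mu_s\}$ and use pigeonhole to get $\mu_i^{r^b-r^a}=1$; your argument is actually tighter, since lying on the unit circle does not by itself give a root of unity, while your finiteness argument does (note you need $\mu_i\neq 0$, which follows from $C_i\not\subset\Delta_{n-1}$, and implicitly $r\geq 2$, which both proofs assume).

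One step you should tighten: in the sufficiency direction you say the reverse inclusion follows by ``reading condition 2 as ensuring each component $C_j$ is realized.'' Condition 2 does not literally say this; it only says every product lands in the set $\{\xi^{\epsilon_1},\dots,\xi^{\epsilon_s}\}$. The realization of every $C_j$ is true but needs a one-line argument: multiplication by $\xi^{(r-1)\epsilon_1}$ maps the finite set $\{\xi^{\epsilon_1},\dots,\xi^{\epsilon_s}\}$ into itself by condition 2 and is injective, hence bijective, so every $\xi^{\epsilon_j}$ arises as $\xi^{(r-1)\epsilon_1}\xi^{\epsilon_k}$ for some $k$, i.e.\ from the admissible tuple $d_1=r-1$, $d_k=1$. (The paper's own proof only verifies the inclusion $(C_1+\cdots+C_s)^{\star r}\subseteq C_1+\cdots+C_s$ and leaves this point implicit, so adding it improves on both accounts.)
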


\begin{proof}
Assume 1. and 2. holds. Any terms in $(C_1\cup\cdots \cup C_s)^{\star r}$ has the form $C_1^{\star d_1}\star C_2^{\star d_2}\star \cdots \star C_s^{\star d_s}$, with $d_1+d_2+\cdots +d_s=r$. Condition 2, together with Proposition \ref{2bin}, applied to the binomial hypersurfaces $C_i$'s, of type $(t,\epsilon_i)$, implies that $C_1^{\star d_1}\star C_2^{\star d_2}\star \cdots \star C_s^{\star d_s}=C_j$ for some $j=1,\dots, s$.

For the other direction, the condition $(C_1\cup\cdots \cup C_s)^{\star r}=C_1\cup\cdots \cup C_s$ implies that $C_1^{\star d_1}\star C_2^{\star d_2}\star \cdots \star C_s^{\star d_s}$, with $d_1+d_2+\cdots +d_s=r$, is equal to $C_j$ for some $j=0, \dots, s$. In particular, if we consider $d_i=r$ we get $C_i^{\star r}=C_j$ for some $j=1, \dots, s$ and for all $i=1, \dots s$. Repeatedly applying Proposition \ref{getbin} (starting from $C_i\star C_i^{\star r-1}$) it follows that $C_i$ is a binomial hypersurface for $i=1,\dots,s$ and, moreover, that all these components are defined by the same binomial equation, with different coefficients.
Let assume that $C_i=Z(X^{I_1}-\alpha_iX^{I_2})$, for $i=1,\dots,s$. Hence, by the previous consideration we have
\begin{equation}\label{prodalpha}
\begin{array}{ll}
\alpha_1^{d_1}\cdots\alpha_s^{d_s}=\alpha_j &\mbox{ for some } j=1,\dots s,\\
&\mbox{ with } d_1+\cdots + d_s=r.
\end{array}
\end{equation}

We claim that $|\alpha_i|=1$ for $i=1,\dots, s$. To this aim, assume, eventually reordering the components, that
\[
|\alpha_1|\leq |\alpha_2| \leq \cdots \leq |\alpha_s|.
\]
If there exists a index $j$ such that $|\alpha_j|>1$, then $|\alpha_s|>1$. Taking the Hadamard power $C_s^{\star r}$ we get, by Proposition \ref{2bin}, a hypersurface defined by the  polynomial $X^{I_1}-\alpha_s^rX^{I_2}$. Since $|\alpha_s^r|>|\alpha_s|$, condition (\ref{prodalpha}) is not satisfied and we get a contradiction.
Similarly, if there exists a index $j$ such that $|\alpha_j|<1$, then $|\alpha_1|<1$. Taking the the Hadamard power $C_1^{\star 1}$ we get a hypersurface defined by the  polynomial $X^{I_1}-\alpha_1^rX^{I_2}$. Since $|\alpha_1^r|<|\alpha_1|$, condition (\ref{prodalpha}) is not satisfied and, again, we get a contradiction.

Since $\alpha_i$ lies in the unitary circle, we can write  $\alpha_i=\eta_i^{\rho_i}$ where  $\eta_i$ is a $t_i-$th root of unity, and $1\leq \rho_i \leq t_i$, for $i=1, \dots,s$.

Let $t-1=lcm(t_1, t_2, \dots, t_s)$, then we can see each $\alpha_i$ as a $(t-1)-$th root of unity and  write $\alpha_i=\xi^{\epsilon_i}$, where $\xi$ is a primitive $(t-1)$-th root of unity and $1\leq \epsilon_i\leq t-1$, for $i=1, \dots ,s$. Hence each component $C_i$ is a binomial hypersurfaces,  of type $(t,\epsilon_i)$, $i=1,\dots, s$, and condition 1. is verified.

Condition 2. now follows directly substituting $\alpha_i=\xi^{\epsilon_i}$ in condition (\ref{prodalpha}).
\end{proof}

\begin{Ex}
Let $\xi$ be the $6-$th root of unity $e^{\frac{2\pi i}{6}}$ and define the hypersurfaces
\[ 
\mathcal{C}_j=Z(h_j)
\]
where $h_j=X^{I_1}-\xi^jX^{I_2}$, $j=1,\dots 6$, and  $X^{I_1}$ and $X^{I_2}$ are coprime monomials of degree $d$.

According to Proposition \ref{propidem1} we have the following Hadamard multiplication table

\[
\begin{array}{c|c|c|c|c|c|c|}
& \mathcal{C}_1 & \mathcal{C}_2 & \mathcal{C}_3 & \mathcal{C}_4 & \mathcal{C}_5 & \mathcal{C}_6 \\
\hline
\mathcal{C}_1 & \mathcal{C}_2 & \mathcal{C}_3 & \mathcal{C}_4 & \mathcal{C}_5 & \mathcal{C}_6 & \mathcal{C}_1 \\
\hline
\mathcal{C}_2 & \mathcal{C}_3 & \mathcal{C}_4 & \mathcal{C}_5 & \mathcal{C}_6 & \mathcal{C}_1 & \mathcal{C}_2 \\
\hline
\mathcal{C}_3 & \mathcal{C}_4 & \mathcal{C}_5 & \mathcal{C}_6 & \mathcal{C}_1 & \mathcal{C}_2 & \mathcal{C}_3 \\
\hline
\mathcal{C}_4 & \mathcal{C}_5 & \mathcal{C}_6 & \mathcal{C}_1 & \mathcal{C}_2 & \mathcal{C}_3 & \mathcal{C}_4 \\
\hline
\mathcal{C}_5 & \mathcal{C}_6 & \mathcal{C}_1 & \mathcal{C}_2 & \mathcal{C}_3 & \mathcal{C}_4 & \mathcal{C}_5 \\
\hline
\mathcal{C}_6 & \mathcal{C}_1 & \mathcal{C}_2 & \mathcal{C}_3 & \mathcal{C}_4 & \mathcal{C}_5 & \mathcal{C}_6 \\
\hline
\end{array}
\]
\end{Ex}

Consider $C=\mathcal{C}_1\cup\mathcal{C}_3\cup\mathcal{C}_5$.
If we compute $C^{\star2}$, using the previous table, we get 
\[
(\mathcal{C}_1\cup\mathcal{C}_3\cup\mathcal{C}_5)^{\star 2}=\mathcal{C}_2\cup\mathcal{C}_4\cup\mathcal{C}_6.
\]
Then, computing $(\mathcal{C}_1\cup\mathcal{C}_3\cup\mathcal{C}_5)^{\star 3}$ we get

\begin{equation*}
\begin{split}
    (\mathcal{C}_1\cup\mathcal{C}_3\cup\mathcal{C}_5)^{\star 3}&=(\mathcal{C}_2\cup\mathcal{C}_4\cup\mathcal{C}_6)\star (\mathcal{C}_1\cup\mathcal{C}_3\cup\mathcal{C}_5)\\
    &=\mathcal{C}_1\cup\mathcal{C}_3\cup\mathcal{C}_5.
     \end{split}
\end{equation*}
Notice that Conditions 1. and 2. are satisfied for $\mathcal{C}_1,\mathcal{C}_3,\mathcal{C}_5$ and $r=3$.

\section{On Hadamard powers of varieties}\label{powvar}

The result presentend in this section, generalizes the sufficient condition to have $V^{\star 2}=V$, in \cite{FOW} and stated in Proposition \ref{Oneto} in the introduction of this paper, to the case of higher powers.

Similarly to the case of hypersurfaces, we define a specific class of varieties.

\begin{Def} Let $C\subset \PP^n$ be a binomial variety. We say that $C$  is of type  $[(t_{1},\epsilon_{1}),\dots, (t_{s},\epsilon_{s})]$ if the ideal of $C$ is generated by
\[
X^{I_{1,1}}-\xi_{1}^{\epsilon_{1}}X^{I_{1,2}}, \dots, X^{I_{s,1}}-\xi_{s}^{\epsilon_{s}}X^{I_{s,2}},
\]
where  $\xi_{i}$ is a primitive $(t_{i}-1)-$th root of unity and $1\leq \epsilon_{i} \leq t_{i}-1$ for $i=1,\dots, s$.
\end{Def}

\begin{Thm}\label{thmidemv}
Let $C\subset \PP^n$ be a binomial variety of  type $[(t_{1},\epsilon_{1}),\dots, (t_{s},\epsilon_{s})]$ with  
\begin{equation}\label{formulona}
lcm\left(\frac{t_{1}}{gcd(t_{1},\epsilon_{1})},\frac{t_{2}}{gcd(t_{2},\epsilon_{2})}\dots, \frac{t_{s}}{gcd(t_{s},\epsilon_{s})}\right)=t-1.
\end{equation}
Then $C^{\hadamardot t}=C$  and $t$ is the minimal exponent for which the previous equality holds.
\end{Thm}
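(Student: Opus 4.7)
The plan is to parallel the proof of Proposition~\ref{propidem1}, applying Theorem~\ref{ThmGens} to every generator of $I(C)$ at once. Write $g_j=X^{I_{j,1}}-\xi_j^{\epsilon_j}X^{I_{j,2}}$ for $j=1,\dots,s$, so that $I(C)=\langle g_1,\dots,g_s\rangle$. The whole argument reduces to Hadamard-transforming each $g_j$ by a generic product $Q$ of points of $C$ and deciding when the transform is a scalar multiple of $g_j$.

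For the equality $C^{\hadamardot t}=C$, I would pick generic points $P_1,\dots,P_{t-1}\in C\setminus\Delta_{n-1}$ and set $Q=P_1\hadamardot\cdots\hadamardot P_{t-1}$. Since each $P_k$ satisfies $P_k^{I_{j,1}}=\xi_j^{\epsilon_j}P_k^{I_{j,2}}$, multiplying coordinatewise gives $Q^{I_{j,1}}=\xi_j^{\epsilon_j(t-1)}Q^{I_{j,2}}$. The hypothesis (\ref{formulona}), which I read as $(t_j-1)/\gcd(t_j-1,\epsilon_j)\mid t-1$ for every $j$ (the only interpretation consistent with the single-binomial case of Proposition~\ref{propidem1}), forces $\xi_j^{\epsilon_j(t-1)}=1$, so $Q^{I_{j,1}}=Q^{I_{j,2}}$. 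Theorem~\ref{ThmGens} then shows $I(Q\hadamardot C)$ is generated by
\[
g_j^{\hadamardot Q}=\frac{1}{Q^{I_{j,1}}}X^{I_{j,1}}-\frac{\xi_j^{\epsilon_j}}{Q^{I_{j,2}}}X^{I_{j,2}}=\frac{1}{Q^{I_{j,1}}}\,g_j,
\]
which are scalar multiples of the $g_j$, hence $Q\hadamardot C=C$. The degenerate case $Q\in C\cap\Delta_{n-1}$ is absorbed via Remark~\ref{puntidelta} exactly as in Proposition~\ref{propidem1}, yielding $C^{\hadamardot t}=C$.

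For the minimality statement, I would suppose $C^{\hadamardot r}=C$ with $1<r\le t$. Choosing generic $P_1,\dots,P_{r-1}\in C$ and setting $Q=P_1\hadamardot\cdots\hadamardot P_{r-1}$, we have $Q\hadamardot C\subseteq C^{\hadamardot r}=C$, and the dimension bound $\dim(V\hadamardot W)\le\dim V+\dim W$ from Section~\ref{basic}, together with $C^{\hadamardot r}=C$, promotes this inclusion to an equality $Q\hadamardot C=C$ for generic $Q$. Comparing ideals via Theorem~\ref{ThmGens} and using the coprimality of $I_{j,1}$ and $I_{j,2}$ (so that the irreducible binomial with that support is unique up to scalar), each $g_j^{\hadamardot Q}$ must be a scalar multiple of $g_j$, which forces $\xi_j^{\epsilon_j(r-1)}=1$ and hence $(t_j-1)/\gcd(t_j-1,\epsilon_j)\mid r-1$ for every $j$. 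Taking the lcm and invoking (\ref{formulona}) gives $r\ge t$.

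The main obstacle is precisely the generator-by-generator matching in the minimality step: a priori the set $\{g_j^{\hadamardot Q}\}$ generating $I(Q\hadamardot C)$ might match $I(C)$ only after a nontrivial change of basis among the $g_j$, and one must rule this out by an argument in the spirit of Proposition~\ref{getbin}, exploiting the uniqueness up to scalar of the irreducible binomial with each prescribed support. A secondary nuisance is tracking the behavior on $\Delta_{n-1}$, handled uniformly for all binomial generators by the same Remark~\ref{puntidelta} device used in Proposition~\ref{propidem1}.
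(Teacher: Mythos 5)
Your argument for $C^{\hadamardot t}=C$ is essentially the paper's own proof: the same coordinatewise computation showing $Q^{I_{j,1}}=Q^{I_{j,2}}$ for $Q=P_1\hadamardot\cdots\hadamardot P_{t-1}$, then Theorem~\ref{ThmGens} to see each $g_j^{\hadamardot Q}$ is a scalar multiple of $g_j$, with the $\Delta_{n-1}$ cases absorbed via Remark~\ref{puntidelta}; moreover your reading of (\ref{formulona}) with $t_j-1$ in place of $t_j$ is exactly what the paper's proof uses (it invokes that $t-1$ is the least integer with $(\xi_j^{\epsilon_j})^{t-1}=1$ for all $j$), so the printed formula is evidently a typo. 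On minimality you in fact go beyond the paper, which gives no separate argument past that least-integer remark; the generator-matching obstacle you flag is a genuine point of care, but it is a gap in the paper's own treatment as much as in your sketch, not a divergence of approach.
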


\begin{proof}
By hypothesis the ideal of  $C$  is generated by $F_j(x_0,\dots, x_n)=X^{I_{j,1}}-\xi_{j}^{\epsilon_{j}}X^{I_{j,2}}$ for $j=1, \dots, s$. Consider $t-1$ points $P_i\in C\setminus \Delta_{n-1}$. Each point satisfies $P_i^{I_{j,1}}=\xi_{j}^{\epsilon_{j}} P_i^{I_{j,2}}$ and
\begin{equation}\label{prodeqgenv}
\begin{split}
(P_1\hadamardot \cdots \hadamardot P_{t-1})^{I_{j,1}}&=P_1^{I_{j,1}}\cdots P_{t-1}^{I_{j,1}}\\
&=(\xi_{j}^{\epsilon_{j}})^{t-1}P_1^{I_{j,2}}\cdots P_{t-1}^{I_{j,2}} \\
&=P_1^{I_{j,2}}\cdots P_{t-1}^{I_{j,2}}\\
&=(P_1\hadamardot \cdots \hadamardot P_{t-1})^{I_{j,2}}
\end{split}
\end{equation}
where the third equality follows from the fact that $t-1$ is the least integer such that $1=(\xi_{j}^{\epsilon_{j}})^{t-1}$, for all $j=1, \dots s$ since (\ref{formulona}) holds.

By Theorem \ref{ThmGens}, the equations for $P_1\hadamardot \cdots \hadamardot P_{t-1} \hadamardot C$ are
\[
F_j^{\hadamardot (P_1\hadamardot \cdots \hadamardot P_{t-1} )}(x_0,\dots,x_n)=\frac{ X^{I_{j,1}}}{(P_1\hadamardot \cdots \hadamardot P_{t-1})^{I_{j,1}}}- \frac{\xi_{j}^{\epsilon_{j}}X^{I_{j,2}}}{(P_1\hadamardot \cdots \hadamardot P_{t-1})^{I_{j,2}}}
\]
and, by (\ref{prodeqgenv}), one has  that $F_j^{\hadamardot (P_1\hadamardot \cdots \hadamardot P_{t-1} )}(x_0,\dots,x_n)$ and $F_j(x_0,\dots,x_n)$ are equal up to a multiplicative constant, hence $P_1\hadamardot \cdots \hadamardot P_{t-1} \hadamardot C=C$.
Since this holds for every choice of $P_1,\dots, P_{t-1}\in C\setminus \Delta_{n-2}$ and, by Remark \ref{puntidelta}, $P_1\hadamardot \cdots \hadamardot P_{t-1} \hadamardot C\subset  C$ if $P_1\hadamardot \cdots \hadamardot P_{t-1}  \in C\cap \Delta_{n-2}$, one has $C^{\hadamardot t}=C$.
\end{proof}

\end{document}